\newtheorem{lemma}{Lemma}[section]
\newtheorem{theorem}[lemma]{Theorem}
\newtheorem{cor}[lemma]{Corollary}
\newtheorem{conj}[lemma]{Conjecture}
\newtheorem{claim*}{Claim}
\newtheorem{thm}[lemma]{Theorem}
\newtheorem{defn}[lemma]{Definition}
\theoremstyle{definition}
\newtheorem{example}[lemma]{Example}
\newcommand{\cN}{\mathcal{N}}
\newcommand{\cB}{\mathcal{B}}
\newcommand{\PP}{{\mathbb P}}
\newcommand{\F}{{\mathbb F}}
\newcommand{\Q}{{\mathbb Q}}
\newcommand{\Z}{{\mathbb Z}}
\newcommand{\calB}{{\mathcal B}}
\newcommand{\calO}{{\mathcal O}}
\newcommand{\OO}{{\mathcal O}}
\newcommand{\fp}{\mathfrak{p}}
\newcommand{\mP}{\mathfrak{P}}
\newcommand{\ff}{\mathfrak{f}}
\DeclareMathOperator{\Tr}{Tr}
\DeclareMathOperator{\Gal}{Gal}
\DeclareMathOperator{\Norm}{ Norm}
\DeclareMathOperator{\ord}{ord}
\numberwithin{equation}{section}
\numberwithin{table}{section}
\title[$S$-Unit Equations
and the Asymptotic Fermat Conjecture]{$S$-Unit Equations
and the Asymptotic Fermat Conjecture over Number Fields}
\author{Ekin Ozman}
\author{Samir Siksek}
\address{Bogazici University\\
Department of Mathematics\\
Bebek, Istanbul, 34342 \\
Turkey}
\email{ekin.ozman@boun.edu.tr}
\address{Mathematics Institute\\
    University of Warwick\\
    CV4 7AL \\
    United Kingdom}
\email{s.siksek@warwick.ac.uk}
\thanks{Ozman is partially supported by TUBITAK Research Grant
117F045 and Max Planck Institute for Mathematics (MPIM) and would like to thank
MPIM for providing excellent research facilities.
Siksek is supported by
EPSRC grant \emph{Moduli of Elliptic curves and Classical Diophantine Problems}
(EP/S031537/1).}
\date{\today}
\subjclass[2010]{11D41, 11G05, 11D61}
\keywords{S-Unit Equations, Diophantine Equations, Fermat Equation}
\begin{document}

	\maketitle

\begin{abstract}
Recent attempts at studying the Fermat equation
over number fields have uncovered an unexpected
and powerful connection with $S$-unit equations.
In this expository paper we explain this connection
and its implications for the asymptotic Fermat conjecture.
\end{abstract}

\begin{comment}
\ekin{	Tentative OUTLINE

	\begin{itemize}
	\item Intro (write at the end)- Section 1

	\item What is an S-unit, S-unit equation, why solutions are finite? Examples- Section 2

	\item Applications of solving S-unit equations, just name a few -Section 3
	\item Explain Fermat type in great detail, mention the papers used this technique, explain the technique, compare the technique with wiles method-  Section 3
	\item Examples of cases that FLT holds via S-units- Section 4
	\end{itemize}
	}
\end{comment}

\section{Introduction}
Every mathematician is familiar with
the statement of Fermat's Last Theorem,
proved by Wiles and Taylor \cite{Wiles}, \cite{TW} in 1994.
\begin{thm}[Wiles]
Let $p \ge 3$ be a prime. Then the only solutions to the
equation
\begin{equation}\label{eqn:Fermat}
x^p+y^p+z^p=0
\end{equation}
with $x$, $y$, $z \in \Q$ satisfy $xyz=0$.
\end{thm}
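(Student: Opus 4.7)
The plan is to follow the strategy of Frey, Serre, Ribet, Wiles, and Taylor-Wiles. Assume for contradiction that there exists a primitive solution $(a,b,c) \in \Z^3$ to $a^p + b^p + c^p = 0$ with $abc \ne 0$ and $\gcd(a,b,c) = 1$; after permuting the variables and adjusting signs, I may assume that $b$ is even and $a \equiv -1 \pmod{4}$. To this hypothetical solution I would attach the Frey elliptic curve
$$E_{a,b,c} \colon Y^2 = X (X - a^p)(X + b^p)$$
over $\Q$. A direct computation with Tate's algorithm shows that $E_{a,b,c}$ is semistable, with multiplicative reduction at every prime $\ell \mid abc$, and its conductor is the squarefree integer $\prod_{\ell \mid abc} \ell$.

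Next I would analyze the mod $p$ Galois representation $\overline{\rho}_{E,p} \colon \Gal(\Qbar/\Q) \to \GL_2(\F_p)$ afforded by $E_{a,b,c}[p]$. Mazur's theorem classifying rational isogenies of prime degree of elliptic curves over $\Q$ forces $\overline{\rho}_{E,p}$ to be irreducible for $p \ge 5$ (with a small separate argument at $p = 3$). The theory of the Tate curve, together with the fact that at each prime $\ell \mid abc$ the $\ell$-adic valuation of the minimal discriminant of $E_{a,b,c}$ is divisible by $p$, shows that $\overline{\rho}_{E,p}$ is unramified outside $\{2, p\}$ and arises from a finite flat group scheme at $p$. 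I would then invoke the Modularity Theorem of Wiles, Taylor-Wiles, and Breuil-Conrad-Diamond-Taylor to deduce that $E_{a,b,c}$ is modular, and apply Ribet's level-lowering theorem to produce a weight $2$ cuspidal newform $g \in S_2(\Gamma_0(2))$ whose attached mod $p$ representation is isomorphic to $\overline{\rho}_{E,p}$.

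The final step is to observe that $S_2(\Gamma_0(2)) = 0$, since the modular curve $X_0(2)$ has genus zero; the resulting contradiction completes the proof. The main obstacle in this program, and the true content of Wiles's theorem, is the modularity step: its proof demands an elaborate $R = T$ argument identifying a universal Galois deformation ring with a Hecke algebra, relying on the Taylor-Wiles patching technique, and it is here that almost all of the depth of the argument lies. By comparison the remaining ingredients (the irreducibility of $\overline{\rho}_{E,p}$ via Mazur, the Tate curve computation, Ribet's level-lowering theorem, and the genus computation of $X_0(2)$) are much more classical and essentially self-contained.
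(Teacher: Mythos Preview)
Your proposal is correct and follows essentially the same route as the paper's own sketch in Section~\ref{sec:SM} (the case $L=1$ there): normalize a putative non-trivial solution, attach the Frey curve $Y^2=X(X-A)(X+B)$, invoke Mazur for irreducibility of $\overline{\rho}_{E,p}$, use modularity and Ribet's level-lowering to descend to a weight~$2$ newform of level~$2$, and conclude from the vanishing of that space. The only minor remarks are that citing Breuil--Conrad--Diamond--Taylor is superfluous since the Frey curve is semistable (so Wiles and Taylor--Wiles already suffice), and that the paper's sketch, like most expositions, tacitly works with $p\ge 5$ and leaves $p=3$ to the classical descent argument you allude to.
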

This survey is concerned with generalizations of Fermat's Last Theorem
where $\Q$ is replaced by a number field $K$, and also
with similar Fermat-type equations where $A, B, C$ are in the ring of
integers $\OO_K$ of $K$:
\begin{equation}\label{eqn:genFermat}
A x^p+B y^p+Cz^p=0, \qquad A x^p+B y^p=C z^2, \qquad
A x^p+B y^p=C z^3,
\end{equation}
again over number fields.
Interest in the Fermat equation \eqref{eqn:Fermat} over number fields
goes back to the 19th century and early 20th century.
Dickson \cite[pages 758 and 768]{Dickson} in his monumental
\emph{History of the Theory of Numbers}, surveys the early
history and mentions the efforts of
%Before Wiles,
%most attacks on the Fermat equation over number fields
%were in fact generalizations of Kummer's cyclotomic approach to the
%Fermat equation over $\Q$. It appears from Dickson's survey
Maillet (1897) and Furtw\"{a}ngler (1910)
who extended Kummer's cyclotomic approach to the Fermat
equation over $\Q(\zeta_p)$.
Later, Hao and Parry \cite{HP}
used the Kummer approach to prove several results concerning
the exponent $p$ Fermat equation \eqref{eqn:Fermat}
over a quadratic field $\Q(\sqrt{d})$
subject to the condition that the prime $p$ does not
divide the class number of $\Q(\sqrt{d}, \zeta_p)$.
The following theorem is due
to Kolyvagin \cite{Kolyvagin}, and is a beautiful example
of how far the cyclotomic approach can be pushed.
\begin{thm}[Kolyvagin]
Let $p \ge 5$ be a prime
and write $\zeta_p$ for a primitive $p$-th root of unity.
Let $x$, $y$, $z \in \Z[\zeta_p]$ satisfy \eqref{eqn:Fermat},
with $(1-\zeta_p) \nmid xyz$ (such a solution is
called a \lq first case solution\rq). Then $p^2 \mid (q^p-q)$
for all primes $q \le 89$ with $q \ne p$.
\end{thm}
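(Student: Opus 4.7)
The natural plan is to extend the classical Wieferich--Mirimanoff strategy for the first case of Fermat's Last Theorem. Working in $L=\Q(\zeta_p)$ with ring of integers $\Z[\zeta_p]$, one starts from the factorization
$$\prod_{i=0}^{p-1}\bigl(x+\zeta_p^i y\bigr) \;=\; -z^p$$
in $\Z[\zeta_p]$. The hypothesis $(1-\zeta_p)\nmid xyz$ is exactly what is needed to show that the factors $(x+\zeta_p^i y)$ are pairwise coprime as ideals (the only possible common divisor is the unique prime $(1-\zeta_p)$ above $p$, which the first-case assumption rules out). Consequently each ideal $(x+\zeta_p^i y)$ is a perfect $p$-th power: $(x+\zeta_p^i y)=\mathfrak{a}_i^p$ for some fractional ideal $\mathfrak{a}_i$.

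Next I would pass from ideals back to elements. The class $[\mathfrak{a}_i]\in \Cl(L)[p]$ is controlled by the $p$-part of the class group of $\Q(\zeta_p)$, but this obstruction can be circumvented by the usual Kummer trick: after multiplying $x+\zeta_p y$ by a suitable unit and a $p$-th power one obtains an element $\alpha\in\Z[\zeta_p]$ which is congruent to a rational integer (essentially $x+y$) modulo $(1-\zeta_p)^2$. This congruence condition is precisely the hypothesis needed to apply Eisenstein reciprocity.

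The core of the argument is then Eisenstein's reciprocity law for the $p$-th power residue symbol: for $\alpha$ as above and an auxiliary rational prime $q\ne p$, it relates $\left(\tfrac{\alpha}{q}\right)_p$ to $\left(\tfrac{q}{\alpha}\right)_p$. Because $\alpha$ generates a $p$-th power ideal, the right-hand symbol is trivial, forcing $q$ to be a $p$-th power residue modulo a prime of $L$ above $q$. Unwinding this via the explicit form of the residue symbol, combined with computing the Fermat quotient $(q^{p-1}-1)/p$ modulo $p$ in $\Z[\zeta_p]/(1-\zeta_p)^2$, produces the Wieferich-type congruence $q^{p-1}\equiv 1\pmod{p^2}$, i.e.\ $p^2\mid q^p-q$.

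The main obstacle is extending this mechanism from the classical low-$q$ range (Wieferich for $q=2$, Mirimanoff for $q=3$, subsequent refinements reaching $q=43$) all the way to $q\le 89$. This is where Kolyvagin's contribution lies, and I expect the hard work to consist of a delicate analysis combining \emph{several} Eisenstein-reciprocity relations simultaneously, together with fine information on cyclotomic units and Bernoulli numbers modulo $p^2$ (in the spirit of Herbrand--Ribet and Iwasawa theory). Rather than treating each $q$ in isolation, one likely produces a system of congruences among Fermat quotients $q_i^{p-1}/p$ for $q_i\le 89$ and uses independence of these quantities — modulo the known Kummer--Vandiver type constraints on the $p$-class group of $\Q(\zeta_p)$ — to deduce that each individual Wieferich congruence must hold. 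Formalising the passage from a handful of $q$'s to every $q\le 89$ is the step I would expect to require genuinely new input beyond the classical toolkit.
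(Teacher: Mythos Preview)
The paper does not prove this theorem. It is stated, with attribution to Kolyvagin \cite{Kolyvagin}, purely as an illustration of ``how far the cyclotomic approach can be pushed''; no argument is given or sketched. So there is no proof in the paper to compare your proposal against.

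As for the proposal itself: your outline of the Eisenstein-reciprocity/Fermat-quotient mechanism is the right framework, but note that Kolyvagin's theorem concerns solutions $x,y,z\in\Z[\zeta_p]$, not just $\Z$. The classical Wieferich--Mirimanoff line you invoke was developed for rational-integer solutions, and part of Kolyvagin's contribution is precisely making the argument work in the larger ring; your sketch glosses over what changes when $x,y,z$ are already cyclotomic integers (e.g.\ the ``congruent to a rational integer mod $(1-\zeta_p)^2$'' step needs more care). You also candidly admit that the passage to all $q\le 89$ ``would require genuinely new input beyond the classical toolkit'', which is exactly the content of Kolyvagin's paper --- so your proposal is an honest description of the landscape rather than a proof.
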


Another historically popular approach is to fix a prime
exponent $p$ and consider points of low degree
(i.e.\ points defined over number fields of low degree)
 on the Fermat curve $x^p+y^p+z^p=0$.
For example, Gross and Rohrlich \cite{GR}
determine all points on
the Fermat curve $F_p \, : \,  x^p+y^p+z^p=0$ for
$p=3$, $5$, $7$, $11$ over all number fields $K$
of degree $\le (p-1)/2$ through studying the Mordell--Weil
group of the Jacobian of $F_p$.

After Wiles' proof of Fermat's Last Theorem using Galois representations
and modularity, others tried to extend this approach to various number
fields. The first result along these lines is due to Jarvis and Meekin
\cite{JM} stating that the only solutions to \eqref{eqn:Fermat}
with $x$, $y$, $z \in \Q(\sqrt{2})$ satisfy $xyz=0$.
This was later extended to some real quadratic fields
of small discriminant by Freitas and Siksek \cite{FSsmall},
and to some imaginary quadratic fields of small discriminant
by Tur\c{c}a\c{s} \cite{Turcas} (the later being conditional
on some standard conjectures in the Langlands programme).

\bigskip

Let $K$ be a number field. We say that a solution $(x,y,z) \in K^3$
to the Fermat equation \eqref{eqn:Fermat} is
\textbf{trivial} if $xyz=0$ and \textbf{non-trivial} if
$xyz \ne 0$. In this survey we are primarily concerned with the
following conjecture, which appears to have first
been formulated in \cite{FKS}.
\begin{conj}[The Asymptotic Fermat Conjecture]
Let $K$ be a number field, and suppose the primitive third root of unity, $\zeta_3 \notin K$.
There exists a constant $\mathcal{B}_K$ depending only on $K$ such that
for all primes $p>\mathcal{B}_K$ the only solutions to the Fermat equation
\eqref{eqn:Fermat} with $(x,y,z) \in K^3$ are the trivial solutions.
\end{conj}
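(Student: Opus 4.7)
The plan is to follow the modular method of Wiles, suitably adapted from $\Q$ to the number field setting, with the finiteness of solutions to an $S$-unit equation replacing the explicit contradictions available over $\Q$. Four ingredients are required: a Frey curve construction, modularity of the Frey curve over $K$, Ribet-style level lowering for Hilbert or Bianchi modular forms, and a reduction of the surviving small-level eigenforms to an $S$-unit equation in $\OO_{K,S}^\times$ for a fixed finite set $S$ depending only on $K$.

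First I would normalize a hypothetical non-trivial solution $(a,b,c) \in K^3$ to lie in $\OO_K$ and be coprime away from a finite set $S_0$ of primes --- essentially those above $2$, enlarged by representatives of $\Cl(K)/p\Cl(K)$ so that the ideal $a\OO_K + b\OO_K + c\OO_K$ may be taken trivial. To this solution I would attach the Frey curve $E_{a,b,c}\colon Y^2 = X(X-a^p)(X+b^p)$, whose discriminant $16(abc)^{2p}$ ensures that at every prime $\qq \nmid 2$ dividing $abc$ the curve has multiplicative reduction with $v_\qq(\Delta(E_{a,b,c})) \equiv 0 \pmod p$. Assuming modularity of $E_{a,b,c}$ over $K$ and irreducibility of $\overline{\rho}_{E_{a,b,c},p}$ for $p$ large, Ribet-type level lowering forces the residual representation to arise from an automorphic eigenform $\ff$ of level $\cN$ supported only on $S_0$.

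The next step is to handle the finitely many candidate eigenforms $\ff$ of level $\cN$. For those $\ff$ with rational Hecke field, Eichler--Shimura yields an elliptic curve $E'/K$ of conductor dividing $\cN$; comparing traces $\Tr(\Frob_\qq)$ at primes $\qq \mid abc$ of good reduction for $E'$ but multiplicative reduction for $E_{a,b,c}$ gives mod $p$ congruences that, via the Hasse bound, upgrade to impossible equalities for $p$ sufficiently large, ruling out many $\ff$ outright. Cases where $\ff$ has larger Hecke field are handled by inner-twist and image arguments on $\overline{\rho}_{E_{a,b,c},p}$. The remaining configurations, combined with the constraint that $a\OO_K, b\OO_K, c\OO_K$ are supported on $S_0$, imply that the pair $(\lambda,\mu)=(-a^p/c^p,\,-b^p/c^p)$ lies in $(\OO_{K,S_0}^\times)^2$ and satisfies $\lambda+\mu=1$. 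Siegel--Mahler gives only finitely many such $(\lambda,\mu)$, and the hypothesis $\zeta_3 \notin K$ rules out torsion solutions, so that a rank argument on $\OO_{K,S_0}^\times/\text{torsion}$ forces $p \le \mathcal{B}_K$ for an explicit constant depending only on $K$.

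The principal obstacle is modularity: for $\Q$ and for real quadratic fields this is now known (Wiles and Breuil--Conrad--Diamond--Taylor over $\Q$, Freitas--Le Hung--Siksek over real quadratic fields), but over general number fields --- especially those with complex embeddings, where one must work with Bianchi or more general cohomological automorphic forms --- modularity of $E_{a,b,c}$ depends on still-unproven cases of the Langlands programme. A secondary obstruction is uniform irreducibility of $\overline{\rho}_{E_{a,b,c},p}$ in $p$, which requires an analogue of Mazur's isogeny theorem known only for restricted families of fields. These are precisely the two inputs that prevent the asymptotic Fermat conjecture from being established unconditionally in the stated generality, and that confine current unconditional results to special classes of number fields.
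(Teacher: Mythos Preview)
The statement you are attempting to prove is a \emph{conjecture}; the paper does not prove it and explicitly presents it as open (noting only that it would follow from a suitable $ABC$-conjecture over number fields). So there is no ``paper's own proof'' to compare against, and your proposal cannot be a complete proof either---as you yourself acknowledge in the final paragraph when you identify modularity and uniform irreducibility as unproven inputs.

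That said, your sketch of the modular method contains a genuine error in how the $S$-unit equation enters, and this is worth correcting because it is the central point of the paper. You write that ``the constraint that $a\OO_K, b\OO_K, c\OO_K$ are supported on $S_0$'' forces $(\lambda,\mu)=(-a^p/c^p,-b^p/c^p)$ to be a solution of the $S$-unit equation. But there is no such constraint: your earlier normalization only made $a,b,c$ \emph{coprime} away from $S_0$, which is entirely different from each of them being an $S_0$-unit. An arbitrary non-trivial Fermat triple has $a,b,c$ divisible by primes of $\OO_K$ that have nothing to do with $S_0$, so $-a^p/c^p$ is not an $S_0$-unit in general.

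In the paper's strategy (see the proof sketch of Theorem~6.2 and Section~5), the $S$-unit equation arises from the \emph{other} elliptic curve: after level lowering and Eichler--Shimura one obtains an auxiliary curve $E'/K$ of conductor $\cN$ supported on $S$, with full $2$-torsion and potentially good reduction outside $S$; it is the $\lambda$-invariant of $E'$ (not of the Frey curve) that gives the $S$-unit solution via Lemma~5.3. Moreover, mere finiteness of $S$-unit solutions is not by itself a contradiction---one must then compare images of inertia $\overline{\rho}_{E,p}(I_\mP)$ and $\overline{\rho}_{E',p}(I_\mP)$ at primes $\mP$ above $2$, and this succeeds only under additional hypotheses on the $S$-unit solution (conditions (A), (B) of Theorem~6.2). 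This is exactly why the paper establishes the conjecture only for restricted families of $K$, even granting modularity.
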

We remark that the asymptotic Fermat conjecture follows
from a suitable version of the $ABC$-conjecture
over number fields \cite{Browkin}.

\noindent \textbf{Remarks.}
\begin{itemize}
\item Observe that for $p \ne 3$ we have $1^p+\zeta_3^p+\zeta_3^{2p}=0$.
For this reason it is necessary to exclude number fields
containing $\zeta_3$ in the statement of the conjecture.
\item We cannot expect the statement of Fermat's Last Theorem
to be true over every number field without modification. Indeed,
fix the exponent $p$ for now. The Fermat curve $F_p : x^p+y^p+z^p=0$
contains the rational point $(1:-1:0)$. Now take a line defined over $\Q$
through this point. This must intersect $F_p$ in a further $p-1$ points.
We see that $F_p$ has an infinite family of points defined
over number fields of degree $\le p-1$.
It therefore makes
sense to consider the Fermat equation over a given number field
asymptotically, i.e. for large exponents $p$.
\item Debarre and Klassen \cite{DebarreKlassen} suggest
that the only points on the degree $p$ Fermat curve \eqref{eqn:Fermat}
over number fields of degree $d \le p-2$
%are the trivial ones
%and permutations of $(1:\zeta_3:\zeta_3^2)$.
lie on the line $x+y+z=0$. Observe that  the six obvious points
$(1:-1:0)$, $(1:\zeta_3:\zeta_3^2)$, and their permutations,
do lie on this line.
\end{itemize}

\bigskip

We are grateful to the referees for many useful comments.

\section{The Modular Approach---An Example of Serre and Mazur}
\label{sec:SM}
As we shall see later, it is often possible to relate non-trivial
solutions to Fermat-type equations to solutions to certain
$S$-unit equations. In this section we sketch the
earliest instance of this phenomena, which is an example
due to Serre and Mazur, given in Serre's 1987 Duke
article where he formulated his famous modularity conjecture
\cite{SerreDuke}.
The sketch will be slightly technical, and the
reader unfamiliar with Galois representations and modularity
should feel free to skim through it. Good introductions
to the subject include \cite{bennett2016generalized}
and \cite{SiksekModular}.

\bigskip

Let $L$ be either $1$ or an odd prime. Let $(x,y,z) \in \Z^3$ be a
non-trivial solution
to the equation
\begin{equation}\label{eqn:SM}
x^p+y^p+L^r z^p=0
\end{equation}
where the exponent $p \neq L$ is a prime $\ge 5$ and $r$ is a non-negative integer.
Moreover we may (after suitable scaling and possible
rearrangement of the variables) suppose that $\gcd(x,y,Lz)=1$.
We suppose $r<p$ since we can absorb $p$-th powers of $L$
into $z^p$.
Note that we allow $L=1$ as we would like to include Fermat's Last
Theorem in our sketch.
We let $A$, $B$, $C$ be the three terms $x^p$, $y^p$, $L^r z^p$
arranged so that $A \equiv -1 \pmod{4}$ and $2 \mid B$. Let
$E^\prime$ be the Frey elliptic curve
\[
E^\prime \; : \; Y^2=X(X-A)(X+B).
\]
Serre studies the mod $p$ representation $\overline{\rho}_{E^\prime,p}$
of $E^\prime$,
which is irreducible by Mazur's isogeny theorem.
It follows from theorems of Ribet and Wiles that
the representation $\overline{\rho}_{E^\prime,p}$ arises
from a cuspidal newform $f$ with trivial character of weight $2$ and level
$N=2L$. If $L=1$ (the FLT case) then $N=2$. However, there are no newforms of
weight $2$ and level $2$, which gives
a contradiction, and so there are no non-trivial solutions
for $L=1$. The proof of Fermat's Last Theorem is complete
at this point. In fact there are no newforms of weight $2$
and levels $6$, $10$, $22$. Thus for $L=3$, $5$, $11$ we can also
conclude that there are no non-trivial solutions to
\eqref{eqn:SM}. However, it is easy to deduce
from the dimension formula for newform spaces
\cite[Proposition 15.1.1]{CohenII} that
there are newforms of weight $2$
and level $2L$
for all other odd prime values
$L=7,13,17,19,\dotsc$.
To progress we need to know
a little about the relationship between $E^\prime$
and the newform $f$. The newform $f$ has a $q$-expansion
\[
f=q+\sum_{n=1}^\infty c_n q^n.
\]
The coefficients $c_n$ generate a totally real field $K_f$
and in fact belong to the ring of integers $\OO$ of $K_f$.
There is some prime ideal $\varpi$ of $\OO$ dividing $p$
so that for any prime $\ell \nmid 2Lp$ the following relations hold
\[
\begin{cases}
a_\ell(E^\prime) \equiv c_\ell \pmod{\varpi} & \text{if $\ell \nmid xyz$}\\
\pm (\ell+1) \equiv c_\ell \pmod{\varpi} & \text{if
$\ell \mid xyz$}.
\end{cases}
\]
We do not know the elliptic curve $E^\prime$ as this depends
on a hypothetical solution $(x,y,z)$ to \eqref{eqn:SM}. However, given $\ell \nmid 2L$,
the trace $a_\ell(E^\prime)$ is an integer belonging
to the Hasse interval $[-2\sqrt{\ell},2\sqrt{\ell}]$.
It follows from the above congruences that $\varpi$ divides
\[
\beta_\ell:=\ell \cdot (\ell+1-c_\ell) \cdot (\ell+1+c_\ell)  \, \cdot
\prod_{-2\sqrt{\ell} \le a \le 2\sqrt{\ell}} (a-c_\ell).
\]
As $\varpi$ is a prime ideal dividing $p$ it follows that
$p \mid B_\ell$ where $B_\ell=\Norm_{K_f/\Q}(\beta_\ell)$.
This gives a bound for the exponent $p$ provided $B_\ell \ne 0$
or equivalently $\beta_\ell \ne 0$. Note that
if $c_\ell \notin \Q$ then $\beta_\ell \ne 0$.
If $K_f \ne \Q$ then there is a positive density of primes
$\ell$ such that $c_\ell \notin \Q$ and choosing any of these
with $\ell \nmid 2L$
gives a bound for $p$, and we will be content with that.
From now on our aim is to show that $p$ is bounded.
For example, there are newforms of weight $2$ and level $2 \times 37$
but these are irrational (a newform $f$ is
\textbf{irrational} if $K_f \ne \Q$ and
\textbf{rational} if $K_f=\Q$).
Thus the exponent $p$ is bounded for non-trivial solutions
to \eqref{eqn:SM} when $L=37$.
However, this is not the case for $L=7, 13, 17, 19, 23, 29,
31, 41, \dotsc$ where we do find rational newforms at levels $2L$.
We now ignore the irrational newforms (as they give a bound for
$p$) and focus on the rational ones.

\medskip

A theorem of Eichler and Shimura asserts that a rational weight
$2$ newform $f$ corresponds to an isogeny class
of elliptic curves $E$ defined over $\Q$.
This correspondence was made more precise by Carayol \cite{Carayol}
 who showed that the
level $N$ of $f$ is equal to the conductor of each $E$ in the
isogeny class. The correspondence
asserts that $a_\ell(E)=c_\ell$ for all primes $\ell \nmid N$.
We apply this to our rational newform $f$ of weight $2$ and level $N=2L$.
The earlier congruences become
\[
\begin{cases}
a_\ell(E^\prime) \equiv a_\ell(E) \pmod{p} & \text{if $\ell \nmid xyz$}\\
\pm (\ell+1) \equiv a_\ell(E) \pmod{p} & \text{if
$\ell \mid xyz$}.
\end{cases}
\]
Note that $E^\prime$ has full $2$-torsion and
thus $4 \mid \#E^\prime(\F_\ell)$ for all primes $\ell$
of good reduction.
However, $\# E^\prime(\F_\ell)=\ell+1-a_\ell(E^\prime)$.
Thus $a_\ell(E^\prime)$ belongs to the set
\[
T_\ell=\{a \in \Z \; : \; -2\sqrt{\ell} \le a \le 2\sqrt{\ell}, \qquad \ell+1 \equiv a \pmod{4}\}.
\]
This leads us to conclude that $p$ divides
\[
\gamma_\ell:=\ell \cdot (\ell+1-a_\ell(E)) \cdot (\ell+1+a_\ell(E))  \, \cdot
\prod_{ a \in T_\ell} (a-a_\ell(E))
\]
for any prime $\ell \nmid 2L$. If $\gamma_\ell$ is non-zero
for some $\ell \nmid 2L$ then
we have a bound for the exponent $p$ for non-trivial solutions to
\eqref{eqn:SM}. If $a_\ell(E) \notin T_\ell$ for some prime $\ell \nmid 2L$
then $\gamma_\ell$ is non-zero and we have a bound for $p$.
Thus we are reduced to the case where $a_\ell(E) \in T_\ell$
or equivalently $4 \mid \#E(\F_\ell)$, for all primes $\ell \nmid 2L$.
In that case, it follows from \cite[Lemma 7.5]{Haluk}
that $E$ is isogenous
to an elliptic curve with full $2$-torsion, and since $E$ is
really determined only up to isogeny we now suppose that $E$
has full $2$-torsion. It remains to determine, for which
odd primes $L$,
there is an elliptic curve $E/\Q$ with full $2$-torsion
and conductor $2L$. The answer is given by the following lemma.
\begin{lemma}\label{lem:FM}
Let $L$ be an odd prime. Then there is an elliptic curve
$E/\Q$ with full $2$-torsion and conductor $2L$
if and only if $L$ is a Mersenne or a Fermat prime and $L\ge 31$.
\end{lemma}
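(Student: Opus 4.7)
The plan is to translate existence of such an $E$ into a $\{2,L\}$-unit equation on the differences of the $2$-torsion $x$-coordinates, solve that equation using Mihailescu's Catalan theorem, and then perform the conductor computation at $p=2$.

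Any $E/\Q$ with full $2$-torsion admits an integral model $Y^2=X(X-a)(X+b)$ with $a,b\in\Z$. After rescaling by squares and replacing $E$ by an appropriate quadratic twist by a divisor of $2L$ (which preserves full $2$-torsion and the property of having conductor supported on $\{2,L\}$), I may assume $\gcd(a,b)=1$. Direct computation gives $\Delta(E)=16a^2b^2(a+b)^2$ and $c_4(E)=16(a^2+ab+b^2)$. The hypothesis $\cond(E)=2L$ forces every prime divisor of $ab(a+b)$ to lie in $\{2,L\}$; combined with $\gcd(a,b)=1$ and the fact that $L$ divides the conductor, the integers $|a|,|b|,|a+b|$ are pairwise coprime positive $\{2,L\}$-integers with at least one of them divisible by $L$.

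Coprimality forces at least one of $a,b$ to equal $\pm 1$. Taking $a=1$: if $b=\pm 1$, the triple is $(1,1,2)$, giving the curve $Y^2=X^3-X$ of conductor $32$. If $b=\pm 2^s$ with $s\ge 1$, then $a+b$ is odd, and $L\mid a+b$ forces $a+b=\pm L^r$ for some $r\ge 1$; Mihailescu's theorem yields $r=1$ (so $L=2^s+1$ is Fermat) or the exceptional solution $(s,r,L)=(3,2,3)$ giving $(1,8,9)$. If $b=\pm L^s$ with $s\ge 1$, a parallel argument using $a+b=\pm 2^r$ forces $s=1$ and $L=2^r-1$ a Mersenne prime. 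No further configuration is possible, because if both $a$ and $b$ were non-trivial prime powers in distinct primes from $\{2,L\}$ then $a+b$ would be coprime to $2L$, forcing $a+b=\pm 1$, absurd. Thus the coprime candidate triples are $(1,1,2)$, $(1,L,L+1)$ with $L+1=2^n$ Mersenne, $(1,L-1,L)$ with $L-1=2^n$ Fermat, and $(1,8,9)$ with $L=3$.

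For each non-degenerate candidate one computes $v_2(c_4)=4$, $v_2(c_6)=6$, and $v_2(\Delta)=2n+4$. When $n\ge 5$, i.e.\ $L\ge 31$, the model is non-minimal at $2$ and a suitable change of Weierstrass coordinates produces a minimal model with $v_2(c_4)=0$ and $v_2(\Delta)=2n-8\ge 2$, exhibiting multiplicative reduction at $2$, so $v_2(\cond E)=1$. Multiplicative reduction at $L$ is automatic: $L$ divides exactly one of $a,b,a+b$, so $L\nmid a^2+ab+b^2=c_4/16$. Thus $\cond(E)=2L$, establishing the ``if'' direction. When $n\le 4$, i.e.\ $L\in\{3,5,7,17\}$ (including the Catalan triple with $v_2(\Delta)=10$), the minimal model at $2$ either has $v_2(c_4)\ge 4$ (additive reduction, $v_2(\cond)\ge 2$) or, for $L=17$ with $v_2(\Delta)=12$, becomes a good-reduction model at $2$ ($v_2(\cond)=0$). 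A short case check rules out similarly the twists of these candidate curves by divisors of $2L$, so $\cond(E)\ne 2L$.

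The main obstacle is the conductor analysis at $p=2$: one must identify that the threshold for obtaining multiplicative reduction after the optimal rescaling is precisely $n\ge 5$ (equivalently $L\ge 31$). The Fermat prime $L=17$ is the most delicate case, excluded not because no candidate curve exists but because its minimal model acquires good reduction at $2$, producing a curve of conductor $17$ rather than $34$.
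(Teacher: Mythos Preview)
Your argument is correct and follows the same overall strategy as the paper: pass from a full $2$-torsion curve to a $\{2,L\}$-unit relation among the differences of the $2$-torsion $x$-coordinates, solve that relation, and then analyse the conductor at $2$. The details differ in a few places worth noting. First, you reduce to $\gcd(a,b)=1$ by twisting (which is fine: if $\gcd(a,b)=d$ is squarefree then $E^{(d)}$ has model $Y^2=X(X-a/d)(X+b/d)$), whereas the paper keeps general $(a,b)$ and disposes of the case $L\mid\gcd(a,b)$ via the observation that $v_1=v_2=v_3$ forces $\ord_L(\cond E)\ne 1$. Second, you invoke Mihailescu's theorem, but this is more than is needed: the specific equations $L^r=2^s\pm 1$ with $r\ge 2$ succumb to elementary factorizations (for instance $L^r-1=(L-1)(L^{r-1}+\cdots+1)$ with the second factor odd when $r$ is odd, and $L^{2r'}-1=(L^{r'}-1)(L^{r'}+1)$ a product of two powers of $2$ differing by $2$), which is presumably what the paper means by ``easy exercise''. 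Third, you carry out the $2$-adic conductor analysis explicitly via $(v_2(c_4),v_2(c_6),v_2(\Delta))$, while the paper simply asserts the outcome; in fact your analysis for $n\ge 5$ can be shortened by noting $v_2(j)=12-(2n+4)=8-2n<0$, which already forces multiplicative reduction at $2$. Finally, your proof explicitly supplies the ``if'' direction (exhibiting a curve of conductor $2L$ when $L\ge 31$ is Fermat or Mersenne), which the paper's written proof does not.
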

\begin{proof}
Such an $E$ necessarily has model
\[
E \; : \; Y^2=X(X-a)(X+b)
\]
with $a$, $b \in \Z$ and $ab(a+b) \ne 0$; indeed the discriminant is
$16 a^2 b^2 (a+b)^2$. Moreover we can choose $a$, $b$
so that this model is minimal away from $2$. Thus
\[
a^2 b^2(a+b)^2= 2^u L^v
\]
for some non-negative integers $u$, $v$. It follows that
\[
a=\pm 2^{u_1} L^{v_1}, \qquad b=\pm 2^{u_2} L^{v_2}, \qquad a+b=\pm2^{u_3} L^{v_3}.
\]
Thus
\begin{equation}\label{eqn:sunit2L}
\pm 2^{u_1}L^{v_1} \pm 2^{u_2} L^{v_2}= \pm 2^{u_3} L^{v_3}.
\end{equation}
This is an $S$-unit equation with $S=\{2,L\}$ ($S$-unit equations
are defined in Section~\ref{sec:Sunit}). It is an easy exercise
to conclude from this equation that $L$ is a Fermat or a Mersenne prime,
or that $v_1=v_2=v_3$. However if $v_1=v_2=v_3$ then the exponent
of $L$ in the conductor of $E$ is not $1$. Also for the Mersenne and
Fermat primes $L=3$, $5$, $7$ and $17$, the exponent of $2$ in the conductor of $E$
is not $1$. So we conclude that $L$ is a Mersenne or a Fermat prime
and $L \ge 31$.
\end{proof}

We have the following theorem.
\begin{thm}[Serre and Mazur]
Let $L$ be an odd prime. Suppose $L<31$, or $L$ is neither a
Mersenne
nor a Fermat prime. Then there is a constant $C_L$ such that
for all primes $p>C_L$ the only solutions $(x,y,z) \in \Z^3$
to the
equation \eqref{eqn:SM} are the trivial ones satisfying $xyz=0$.
\end{thm}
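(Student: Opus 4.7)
My plan is to package together the modular machinery set up in this section; essentially every ingredient is already in place and the theorem will fall out of a contrapositive of Lemma~\ref{lem:FM}. Fix $L$ satisfying the hypothesis, let $p$ be a prime, and suppose $(x,y,z)\in\Z^3$ is a hypothetical non-trivial solution to \eqref{eqn:SM}. I attach the Frey curve $E^\prime$ and, via modularity together with Ribet level-lowering, obtain a weight $2$ cuspidal newform $f$ with trivial character and level $N=2L$ whose Hecke eigenvalues $c_\ell$ satisfy the congruences with $a_\ell(E^\prime)$ recalled above, modulo some prime $\varpi\mid p$ of $\OO_{K_f}$. Since the space of such newforms is finite-dimensional, I treat each $f$ separately and define $C_L$ to be the maximum of the bounds on $p$ produced below.

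When $L\in\{3,5,11\}$ there are no weight $2$ newforms at level $2L$, so no such $f$ exists and the conclusion is vacuous. Otherwise I split according to the arithmetic of $f$. If $f$ is irrational ($K_f\ne\Q$), then as noted in the exposition a positive density of primes $\ell$ have $c_\ell\notin\Q$; for any such $\ell\nmid 2L$ the quantity $\beta_\ell$ is nonzero, and $p\mid B_\ell=\Norm_{K_f/\Q}(\beta_\ell)$ therefore bounds $p$.

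The rational case is where the $S$-unit analysis enters. If $K_f=\Q$, Eichler--Shimura and Carayol provide an isogeny class of elliptic curves $E/\Q$ of conductor $2L$ with $a_\ell(E)=c_\ell$ for every $\ell\nmid 2L$. If some $\gamma_\ell$ is nonzero for a prime $\ell\nmid 2L$, then $p\mid \gamma_\ell$ bounds $p$. Otherwise $a_\ell(E)\in T_\ell$ for every such $\ell$, so $4\mid \#E(\F_\ell)$ at every prime of good reduction, and by \cite[Lemma 7.5]{Haluk} I may replace $E$ within its isogeny class by a curve with full $2$-torsion. But then Lemma~\ref{lem:FM} forces $L$ to be a Mersenne or Fermat prime with $L\ge 31$, contradicting the hypothesis on $L$. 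The main obstacle has already been dispatched inside Lemma~\ref{lem:FM}, namely the extraction of the Mersenne/Fermat classification from the $S$-unit equation \eqref{eqn:sunit2L} with $S=\{2,L\}$; everything remaining here is routine bookkeeping within the modular method, together with the mild care needed to let $C_L$ absorb the finitely many newforms at level $2L$.
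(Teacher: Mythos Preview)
Your proposal is correct and follows essentially the same route as the paper: the theorem is stated as a summary of the argument developed throughout Section~\ref{sec:SM}, and you have faithfully assembled those pieces---handling the empty newform spaces for $L\in\{3,5,11\}$, bounding $p$ via $B_\ell$ in the irrational case, and in the rational case invoking Eichler--Shimura, the $\gamma_\ell$ dichotomy, \cite[Lemma~7.5]{Haluk}, and finally Lemma~\ref{lem:FM} to derive the contradiction with the hypothesis on $L$. The only point you leave implicit is the irreducibility of $\overline{\rho}_{E',p}$ via Mazur's isogeny theorem (needed for Ribet's level-lowering), but since you refer to the machinery ``recalled above'' this is not a gap.
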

We note in passing that the equation $x^p+y^p+2^r z^p=0$ is much more difficult
due to the presence of the non-trivial solution $(x,y,z,r)=(1,1,-1,1)$
for all exponents $p$.
Thus no bound for the exponents $p$ of non-trivial solutions is possible.
Ribet \cite{Ribet2} and Darmon and Merel \cite{DM} showed that there are no
solutions apart from the trivial ones and $(x,y,z,r)=(1,1,-1,1)$ and $(-1,-1,1,1)$.

\section{Modular Approach---A General Sketch}\label{sec:gen}
Most modern attacks on Fermat-type equations \eqref{eqn:genFermat}
over a number field $K$ follow
the strategy of Serre and Mazur outlined in the previous section, which
we now briefly describe in more generality.
Again the reader should feel free to skim this section.
The steps are roughly as follows:
\begin{enumerate}
\item[(I)] Associate a Frey elliptic curve $E^\prime$ to a non-trivial solution
$(x,y,z)$.
\item[(II)] Show that the mod $p$ representation $\overline{\rho}_{E^\prime,p}$
 is irreducible.
No generalization of  Mazur's isogeny theorem is available over
number fields. However the desired irreducibility often follows for suitably
large $p$
from Merel's uniform boundedness theorem using the fact that
the Frey curve is close to being semistable.  This approach is
explained in \cite{irred}.
\item[(III)] Show that the $\overline{\rho}_{E^\prime,p}$ is modular
of parallel weight $2$ and level $\cN$
which is independent of the solution $(x,y,z)$ (the level
$\cN$ is an ideal of the ring of integers $\OO_K$).
Over totally real fields it is often possible to use the work of
Kisin, Gee, and others to achieve this. For example, in \cite{FLHS}
it is shown that for a given totally real field $K$ all but finitely
many $j$-invariants are modular. This is usually enough to
show that $\overline{\rho}_{E^\prime,p}$ is modular for $p$ sufficiently
large. Over general number fields we know much less about modularity
of elliptic curves and it is often necessary to assume
a version of Serre's modularity conjecture, as for example
in \cite{Haluk}, \cite{Turcas}.
\item[(IV)] Determine newforms of parallel weight $2$ and level $\cN$.
This is often a difficult step over number fields.
If there are none then one can conclude that there are no non-trivial
solutions. If they are all irrational then one should be able to
at least bound the exponent $p$.
\item[(V)] Instead of determining all newforms of parallel weight $2$ and level
$\cN$ one can focus on the rational newforms. Here there
is a conjectural generalization of the Eichler--Shimura theorem
which is often called the Eichler--Shimura conjecture.
If $K$ is totally real this simply says that a newform of
parallel weight $2$ and level $\cN$ corresponds to an
isogeny class of elliptic curves $E$ of conductor $\cN$,
and this conjecture is in fact known to be true (e.g. \cite{Hida})
if $\cN$ is not squarefull (i.e.\ there is a prime ideal $\mathfrak{q}$ with
$\ord_{\mathfrak{q}}(\cN)=1$). For a version of the Eichler--Shimura
conjecture over general number fields $K$ see \cite{Haluk}.
At any rate, assuming this conjecture, or relying on special cases
of the conjecture that are theorems, we know the existence of
an elliptic curve $E/K$ of conductor $\cN$ with
$\overline{\rho}_{E^\prime,p} \sim \overline{\rho}_{E,p}$.
It is usually possible to show that $E$ has the same torsion
structure as $E^\prime$.
\item[(VI)] So we would like to determine
all elliptic curves $E/K$ of condutor $\cN$ and
having a certain torsion structure.
This can be treated as a Diophantine problem.
For example, to determine all elliptic
curves $E/K$ of conductor $\cN$ with full $2$-torsion
it is enough to solve a certain $S$-unit equation
where $S$ is the set of prime ideals dividing $2 \cN$
(we will say more on that in Section~\ref{EC}).
Not every solution to the $S$-unit equation will
lead back to an elliptic curve of the right conductor.
For example, in the proof of Lemma~\ref{lem:FM}
we excluded solutions to the $S$-unit equation
\eqref{eqn:sunit2L} with $v_1=v_2=v_3$ as these
do not lead back to an elliptic curve of conductor $2L$.
Thus we are probably interested in all solutions to the $S$-unit
equation that satisfy further restrictive conditions.
\end{enumerate}

\section{$S$-Unit Equations}\label{sec:Sunit}
Let $K$ be a number field,  $\OO_K$ be its ring of integers and
$S$ be a finite set of primes ideals of $\OO_K$. In simplest terms, the notion
of $S$-unit generalizes the idea of a unit in $\OO_K$.
\begin{defn}
An \textbf{$S$-unit} is an element $\alpha$ in $K$ such
that the principal fractional ideal generated by $\alpha$
can be written as a product of the prime ideals in $S$.
In other words, the set of $S$-units $ \OO_S^*$ can be defined as:
\[
\OO_S^*=\{ \alpha \in K^* \; : \; \ord_\fp(\alpha)=0 \quad \text{for all
$\fp \notin S$}\}.
\]
Similarly  the set of $S$-integers in $K$ is \[
	\OO_S=\{ \alpha \in K^* \; : \; \ord_\fp(\alpha) \geq 0 \quad \text{for all
	$\fp \notin S$}\}.
		\]
\end{defn}

Note that $S$-units $\OO_S^*$ are units of the ring of $S$-integers $\OO_S$.

\begin{example}
Let $K=\Q$.
Every ideal of $\OO_K=\Z$ is principal, and prime ideals
are generated by primes. Thus we may think of $S$
as a finite set of primes $S=\{p_1,p_2,\dotsc,p_r\}$.
Then an $S$-unit of $K$ is a rational number $\frac{a}{b}$ such
that $a$ and $b$ are only divisible by the primes in $S$;
i.e.
\[
\OO_S^*=\{\pm p_1^{a_1} \cdots p_r^{a_r} \; : \; a_1,\dotsc,a_r \in \Z\}.
\]
%\begin{itemize}
%\item If
%$S=\{2\}$ then $\OO_S^*=\{\pm 2^r : r \in \Z\}$.
%\item If $S=\{2,L\}$ then $\OO_S^*=\{\pm 2^r L^s : r,~s \in \Z\}$.
%\end{itemize}
\end{example}

\begin{example}
Let $K=\Q(\sqrt{5})$, whence $\calO_K =\Z[\frac{1+\sqrt{5}}{2}]$.
\begin{itemize}
\item
If $S=\emptyset$ then
$ \OO_S^*=\left\{ \pm \left(\frac{1+\sqrt{5}}{2}\right)^r : r \in \Z\right\}$.
\item	If $S=\{2 \OO_K\}$ then $ \OO_S^*=\left\{ \pm 2^r \left(\frac{1+\sqrt{5}}{2}\right)^s : r,~s \in \Z\right\}.$
\end{itemize}
	\end{example}

If $S$ and $T$ are sets of prime ideals and $T \subseteq S$
then $\OO_T^*$ is a subgroup of $\OO_S^*$.
Observe that the unit group $\OO_K^*$ is precisely
$\OO_\emptyset^*$. Thus $\OO_K^*$ is a subgroup of
$\OO_S^*$, and every unit is indeed an $S$-unit.
Many facts concerning units have generalizations
to $S$-units.
\begin{theorem}[Dirichet's $S$-Unit Theorem]
%The set of $S$-units, $\OO_S$,  is a multiplicative group containing
%$\calO_K^*$. Morever
The $S$-unit group
$\OO_S^*$ is finitely generated  with rank equal to $r_1+r_2 + \#S-1$,
where $(r_1,r_2)$ is the signature of $K$.
\end{theorem}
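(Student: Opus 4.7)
My plan is to reduce Dirichlet's $S$-unit theorem to the classical Dirichlet unit theorem (the case $S=\emptyset$, giving $\rank \OO_K^* = r_1+r_2-1$) by means of a short exact sequence, with finiteness of the class group of $K$ supplying the extra $\#S$ units.

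First, I consider the valuation map
\[
v\colon \OO_S^* \longrightarrow \Z^S, \qquad \alpha \longmapsto (\ord_\fp(\alpha))_{\fp \in S}.
\]
This is a well-defined group homomorphism. An element $\alpha \in \OO_S^*$ lies in its kernel precisely when $\ord_\fp(\alpha)=0$ for every $\fp \in S$; combined with the defining condition of $\OO_S^*$ (namely $\ord_\fp(\alpha)=0$ for $\fp \notin S$), this forces $\alpha \in \OO_K^*$. Thus $\ker v = \OO_K^*$.

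Next I would show that $v(\OO_S^*)$ has full rank $\#S$ inside $\Z^S$. Let $h$ be the class number of $K$, which is finite. For each $\fp_i \in S$, the ideal $\fp_i^h$ is principal; write $\fp_i^h=(\pi_i)$ with $\pi_i \in \OO_K\setminus\{0\}$. Then $\pi_i \in \OO_S^*$, and $v(\pi_i)=h\, e_i$, where $e_i$ is the $i$-th standard basis vector of $\Z^S$. Hence $v(\OO_S^*) \supseteq h\Z^S$, so $v(\OO_S^*)$ is a subgroup of $\Z^S$ of rank $\#S$ and in particular is free abelian. Combining these two steps gives a short exact sequence
\[
1 \longrightarrow \OO_K^* \longrightarrow \OO_S^* \xrightarrow{\ v\ } v(\OO_S^*) \longrightarrow 0,
\]
which splits because $v(\OO_S^*) \cong \Z^{\#S}$ is free. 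Quoting the classical theorem $\OO_K^* \cong \mu_K \oplus \Z^{r_1+r_2-1}$ (where $\mu_K$ is the finite group of roots of unity in $K$), this yields
\[
\OO_S^* \ \cong\ \mu_K \oplus \Z^{\,r_1+r_2+\#S-1},
\]
proving both finite generation and the stated rank formula.

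The bulk of the difficulty is of course concealed in what I am treating as a black box: the classical Dirichlet theorem, whose proof needs a Minkowski-type geometry-of-numbers argument applied to the logarithmic embedding of $\OO_K^*$. Beyond that, the one genuinely arithmetic ingredient in the $S$-unit reduction is the finiteness of $\Cl(\OO_K)$, which is precisely what lets us produce $S$-units realising any prescribed valuation vector (up to the multiplier $h$), thereby lifting the rank by exactly $\#S$. An alternative, more symmetric route would be to embed $\OO_S^*$ into $\R^{r_1+r_2+\#S}$ via both archimedean logarithms and $-\log\fp$-valuations for $\fp\in S$, use the product formula to land inside a hyperplane, and show the image is a discrete cocompact lattice there by a Minkowski argument tailored to the $S$-adelic setting; this avoids invoking Dirichlet but simply re-does essentially the same geometry-of-numbers work in the enlarged space.
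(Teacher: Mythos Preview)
The paper does not prove this theorem; it merely states Dirichlet's $S$-unit theorem as a classical result and remarks that the ordinary Dirichlet unit theorem is the special case $S=\emptyset$. Your argument is the standard reduction---valuation map to $\Z^S$, kernel $\OO_K^*$, full-rank image via principal powers $\fp_i^h$, split exact sequence---and is correct.
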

Observe that letting $S=\emptyset$ allows us to see the
Dirichlet's unit theorem as a special case of Dirichlet's
$S$-unit theorem.

\begin{defn}
Let $K$ be a number field and $S$ a finite set of prime
ideals of $\OO_K$.
The \textbf{$S$-unit equation} is the equation
\begin{equation}\label{eqn:sunit}
\lambda+\mu=1, \qquad \lambda,~\mu \in \OO_S^*.
\end{equation}
If $S=\emptyset$ so $\OO_S^*=\OO_K^*$ then this is called
the \textbf{unit equation}.
\end{defn}

\begin{theorem}[Siegel 1921, Parry 1950]\label{thm:Siegel}
Let $K$ be a number field and $S$ a finite set of prime
ideals of $\OO_K$.
The $S$-unit equation \eqref{eqn:sunit} has only finitely many solutions.
\end{theorem}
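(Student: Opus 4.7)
The plan is to combine Dirichlet's $S$-unit theorem, just stated, with a Diophantine approximation input---the Thue--Siegel theorem in the original proofs of Siegel and Parry, or Baker's theorem on linear forms in logarithms in later effective refinements---to bound the height of any solution. Finiteness then follows because an $S$-unit of bounded height has bounded exponents in the generators supplied by Dirichlet.

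First I would parametrize: fix a generator $\zeta$ of the torsion subgroup of $\OO_S^*$ and fundamental $S$-units $\eta_1, \dotsc, \eta_r$, so that every solution to \eqref{eqn:sunit} has the form
\[
\lambda = \zeta^{a_0}\prod_{i=1}^{r} \eta_i^{a_i}, \qquad \mu = \zeta^{b_0}\prod_{i=1}^{r} \eta_i^{b_i}.
\]
Since $\mu = 1-\lambda$ is determined by $\lambda$, it suffices to bound $B := \max_i |a_i|$. Fix an integer $n \ge 3$ and write $a_i = n q_i + s_i$ with $s_i \in \{0,\dotsc,n-1\}$; this factors $\lambda = \beta \alpha^n$ where $\alpha = \prod \eta_i^{q_i}$ is an $S$-unit and $\beta$ lies in one of the finitely many cosets of $(\OO_S^*)^n$ in $\OO_S^*$. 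Doing the same for $\mu = \beta' \gamma^n$ and substituting, \eqref{eqn:sunit} becomes one of finitely many equations
\[
\beta X^n + \beta' Y^n = 1
\]
in $S$-integers $X = \alpha$, $Y = \gamma$.

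Each of these is a Thue--Mahler equation, and the classical Thue--Siegel theorem (in the $S$-integral form due to Mahler and Parry) provides finitely many $S$-integral solutions, completing the proof. Alternatively, one can bound $B$ directly by Baker's method: the relation $\lambda + \mu = 1$ forces $|\lambda|_v$ or $|1-\lambda|_v$ to be of size $e^{-cB}$ at some place $v$ of $K$, producing a nonzero linear form $\Lambda = a_0 \log\zeta + \sum a_i \log\eta_i$ in $v$-adic logarithms of algebraic numbers with $|\Lambda|_v \le e^{-cB}$; Baker's theorem (with van der Poorten's non-archimedean analogue when $v$ is finite) yields the conflicting lower bound $|\Lambda|_v \ge (\log B)^{-\kappa}$, forcing $B \le C(K,S)$. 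The main obstacle in either approach is treating archimedean and non-archimedean places on an equal footing---the classical proof uses Mahler's $p$-adic extension of Thue--Siegel, while the Baker route requires the $p$-adic linear-forms-in-logarithms theory unavailable to Siegel and Parry.
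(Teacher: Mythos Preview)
The paper does not actually give a proof of Theorem~\ref{thm:Siegel}; it is stated as a classical result with attribution, followed only by the remark that the original proofs are non-effective, that Baker's theory later yielded effective bounds, and that de Weger's thesis made the problem computationally practical. So there is nothing in the paper to compare your argument against line by line.

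That said, your sketch is correct and is precisely the classical route: reduce via Dirichlet's $S$-unit theorem and the coset trick modulo $(\OO_S^*)^n$ to finitely many Thue--Mahler equations $\beta X^n+\beta' Y^n=1$, and then invoke the Thue--Siegel--Mahler finiteness theorem. Your alternative paragraph via linear forms in logarithms is also the standard effective approach alluded to in the paper's commentary. The only minor looseness is in the Baker sketch: the step ``$\lambda+\mu=1$ forces $\lvert\lambda\rvert_v$ or $\lvert 1-\lambda\rvert_v$ to be of size $e^{-cB}$ at some place $v$'' needs the product formula and a height comparison to justify, and the resulting linear form should really involve the logarithms of both $\lambda$ and $\mu$ (or one of them relative to $1$), not just the exponent vector of $\lambda$. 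But these are routine details, and the overall strategy is sound.
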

The original proofs due to Siegel and Parry are non-effective.
Later on, Baker's
theory of linear forms in logarithms gave effective
though very large bounds for the solutions. In his 1989 PhD
thesis Benne de Weger \cite{deWeger} showed how these bounds
can be combined with the LLL algorithm to give a practical
method for solving such equations. Variants of de Weger's
algorithm can be found in Smart's book \cite{Smart}
and also in \cite{sunit} and \cite{Matschke}.

\begin{example}\label{ex:Smart}
We illustrate the practicality of the algorithm
of de Weger and its variants through the following
example.
Let $F=\Q(\zeta_{16})$ where $\zeta_{16}$ is a primitive
$16$-th root of unity. Then $F$ is a totally complex
number field of degree $8$.
Let $\mathfrak{p}=(1-\zeta_{16}) \cdot \OO_F$;
this is the unique prime above $2$.
Let $S^\prime=\{\mathfrak{p}\}$.
%Then $2 \OO_K= \PP^8$. It follows that $S=T=\{\mP\}$.
Smart
\cite{Smart2}
determines the solutions to the
equation $\lambda+\mu=1$ with $\lambda$, $\mu \in \OO_{S^\prime}^*$
and finds that there are precisely
$795$ solutions $(\lambda,\mu)$---too many to enumerate here!

Let $K=F^+=\Q(\zeta_{16}+\zeta_{16}^{-1})=\Q\left(\sqrt{2+\sqrt{2}}\right)$
be the maximal totally real subfield of $F$, which has degree $4$.
Let $\mathfrak{P}=\sqrt{2+\sqrt{2}} \cdot \OO_{K}$
be the unique prime above $2$ in $\OO_{K}$,
and let $S=\{\mathfrak{P}\}$. The
$S$-unit equation $\lambda+\mu=1$
with $\lambda$, $\mu \in \OO_{S}^*$ has $585$
solutions. Of course this is a subset
of the $795$ solutions to
%\eqref{eqn:sunit}.
the $S^\prime$-unit equation in $F$.
%It turns out that the largest possible
%value of
%$\max\{\lvert \ord_\mP(\lambda)\rvert,\lvert\ord_\mP(\mu)\rvert\}$ is $22$, which is smaller
%than $4 \ord_\mP(2)=32$. By Theorem~\ref{thm:FermatGen},
%assuming Conjectures~\ref{conj:Serre} and~\ref{conj:ES},
%the asymptotic Fermat's Last Theorem holds for $K$.
\end{example}

\begin{example}
Let $K$ be a number field in which there is a degree $1$ prime
$\mathfrak{P}$ above $2$ (i.e. the residue field $\OO_K/\mathfrak{P}=\F_2$).
Let $S$ be a finite set of prime ideals of odd norm.
If $\lambda$, $\mu \in \OO_S^*$ then
$\lambda$, $\mu \equiv 1 \pmod{\mathfrak{P}}$ and so
$\lambda+\mu \equiv 0 \pmod{\mathfrak{P}}$.
Thus the $S$-unit equation \eqref{eqn:sunit} has no solutions.
\end{example}

Before de Weger the most promising method for solving $S$-unit
equations was Skolem's $p$-adic method (now often called
Chabauty--Coleman--Skolem). This method still has a lot of promise,
as the following recent and
beautiful theorem of Nicholas Triantafillou \cite{Triantafillou} shows.
\begin{thm}[Triantafillou]
Let $K$ be a number field. Suppose that $3 \nmid [K : \Q]$
and $3$ splits completely in K. Then there is no
solution to the unit equation in $K$. In other words,
there is no pair $\lambda$, $\mu \in \OO_K^*$
such that $\lambda+\mu=1$.
\end{thm}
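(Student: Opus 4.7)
The plan is to exploit the hypothesis that $3$ splits completely in $K$. Write $3\OO_K = \mathfrak{p}_1 \cdots \mathfrak{p}_n$ where $n = [K:\Q]$ and each residue field is $\F_3$, so by CRT one has an isomorphism of $\F_3$-algebras $\OO_K/3\OO_K \cong \F_3^n$ under which the absolute trace $\Tr_{K/\Q} \bmod 3$ corresponds to the sum-of-coordinates map $\F_3^n \to \F_3$. First I would reduce the equation $\lambda + \mu = 1$ modulo each $\mathfrak{p}_i$: since $\lambda, \mu \in \OO_K^*$ reduce to $\F_3^* = \{\pm 1\}$, the only way to get $\bar\lambda + \bar\mu = 1$ in $\F_3$ is $\bar\lambda = \bar\mu = -1$. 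Hence $\lambda \equiv \mu \equiv -1 \pmod{3\OO_K}$.

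\textbf{The key homomorphism.} Setting $\alpha = -\lambda$ and $\beta = -\mu$, both lie in the subgroup $U' := \{u \in \OO_K^* : u \equiv 1 \pmod{3\OO_K}\}$, and $\alpha + \beta = -1$. Writing $\alpha = 1 + 3a$ and $\beta = 1 + 3b$ with $a, b \in \OO_K$ gives $a + b = -1$. Next, define $\phi : U' \to \OO_K/3\OO_K = \F_3^n$ by $\phi(u) = (u-1)/3 \bmod 3\OO_K$. The identity $(uv - 1)/3 = (u - 1)/3 + u \cdot (v - 1)/3$, together with $u \equiv 1 \pmod{3\OO_K}$, shows $\phi$ is a homomorphism from the multiplicative group $U'$ to the additive group $\F_3^n$. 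By construction $\phi(\alpha) + \phi(\beta) = a + b = -1$, which in $\F_3^n$ is the vector $-\mathbf{1} = (-1,\ldots,-1)$.

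\textbf{Norm constraint and contradiction.} The crux is to show that $\phi(U')$ is contained in the hyperplane $H = \{(c_i) \in \F_3^n : \sum_i c_i = 0\}$. For $u \in U'$, being a unit gives $N_{K/\Q}(u) \in \{\pm 1\}$, while $u \equiv 1 \pmod{3\OO_K}$ forces $N_{K/\Q}(u) \equiv 1 \pmod 3$; together these give $N_{K/\Q}(u) = 1$. Expanding
\[
N_{K/\Q}(1 + 3v) = 1 + 3\Tr_{K/\Q}(v) + 9 \cdot (\text{element of }\Z)
\]
then forces $\Tr_{K/\Q}(v) \equiv 0 \pmod 3$, which under the CRT identification is precisely the assertion $\phi(u) \in H$. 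Applying this to $\alpha, \beta$ gives $\phi(\alpha) + \phi(\beta) \in H$, so the coordinate sum of $-\mathbf{1}$ vanishes in $\F_3$, i.e.\ $n \equiv 0 \pmod 3$, contradicting $3 \nmid [K:\Q]$.

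\textbf{Where the difficulty lies.} I expect the argument to be essentially elementary once the setup is in place; the real challenge is identifying the right invariant. The point is that the only nontrivial multiplicative information about a unit $u \equiv 1 \pmod{3}$ that survives at the level of $(u-1)/3 \bmod 3$ is the single linear constraint $\Tr(v) \equiv 0 \pmod 3$ coming from $N(u) = 1$. Both hypotheses on $K$ are used in a pointed way: the complete splitting of $3$ is needed to identify $\OO_K/3\OO_K$ with $\F_3^n$ so that each local reduction is sharp enough to force $\lambda \equiv -1$, and $3 \nmid [K:\Q]$ is exactly what ensures the target $-\mathbf{1}$ of $\phi(\alpha) + \phi(\beta)$ fails to lie in the hyperplane $H$.
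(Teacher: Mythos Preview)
Your proof is correct and follows essentially the same route as the paper's: reduce modulo $3$ to force $\lambda\equiv\mu\equiv -1\pmod{3\OO_K}$, then expand the norm of $-1+3v$ modulo $9$ to extract $\Tr(v)\equiv 0\pmod 3$ for each of the two units, and add to get $n\equiv 0\pmod 3$. Your packaging is slightly cleaner in two places---you get $\lambda\equiv -1$ directly by inspection in $\F_3$ rather than via $\lambda^2-(\lambda-1)^2$, and passing to $\alpha=-\lambda\equiv 1\pmod 3$ lets you pin down $N(\alpha)=1$ and avoid the paper's case split on the sign of the norm---but the underlying argument is the same.
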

\begin{proof}
We can't resist giving an exposition of Triantafillou's elegant
argument. Let $K$ be a number field in which $3$ splits completely,
and write $3 \OO_K = \fp_1 \cdots \fp_n$ where $n=[K:\Q]$
and the $\fp_j$ are distinct prime ideals with residue field $\F_3$.
Let $\theta \in \OO_K^*$. Then $\theta \equiv \pm 1 \pmod {\fp_j}$
and hence $\theta^2 \equiv 1 \pmod{\fp_j}$ for all $j$.
Thus $\theta^2 \equiv 1 \pmod{3 \OO_K}$.

Now let $\lambda$, $\mu \in \OO_K^*$ satisfy $\lambda+\mu=1$.
By the above
\[
\lambda^2 \equiv 1 \pmod{3\OO_K}, \qquad
(\lambda-1)^2=(-\mu)^2 \equiv 1 \pmod{3 \OO_K}.
\]
Hence $2\lambda-1=\lambda^2-(\lambda-1)^2 \equiv 0 \pmod{3 \OO_K}$,
so $\lambda \equiv -1 \pmod{3\OO_K}$. We write $\lambda=-1+3\phi$
with $\phi \in \OO_K$. Let
$\phi_1,\dotsc,\phi_n$ be the images of $\phi$ under the
$n$ embeddings $K \hookrightarrow \overline{\Q}$. As $\lambda$
is a unit
\[
\pm 1=\Norm(\lambda)=(-1+3\phi_1) \cdots (-1+3\phi_n)
\equiv (-1)^n + (-1)^{n-1} \cdot 3 \Tr(\phi) \pmod{9}.
\]
By considering all the choices for $\pm 1$ and $(-1)^n$,
we obtain
$3 \Tr(\phi) \equiv -2$, $2$ or $0
\pmod{9}$. The first two are plainly  impossible and so
$\Tr(\phi) \equiv 0 \pmod{3}$.

However $\mu=1-\lambda=2-3\phi=-1+3(1-\phi)$ is also a unit.
Thus by the above, $\Tr(1-\phi) \equiv 0  \pmod{3}$. But
$\Tr(1-\phi)=n-\Tr(\phi)$. Therefore $n \equiv 0 \pmod{3}$
completing the proof.
\begin{comment}
\[
\pm 1=\Norm(\mu)=(2-3\phi_1) \cdots (2-3\phi_n)
\equiv 2^n-2^{n-1} \cdot 3 \cdot \Tr(\phi)
\equiv 2^n \pmod{9}.
\]
But one easily checks that $2^n \equiv \pm 1 \pmod{9}$ if and only if
$n \equiv 0 \pmod{3}$, giving the theorem.
\end{comment}
\end{proof}

Perhaps the most elegant theorem on $S$-unit equations is the following
result due to Evertse \cite{Evertse}.
\begin{thm}[Evertse]
Let $(r_1,r_2)$ be the signature of $K$ and let $S$
be a finite set of prime ideals of $\OO_K$. Then
the $S$-unit equation \eqref{eqn:sunit} has at most
$3 \times 7^{3 r_1+4r_2+2\#S}$ solutions.
\end{thm}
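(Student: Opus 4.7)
The plan is to follow Evertse's strategy, which combines a combinatorial classification of solutions with a Diophantine approximation bound applied inside each class. The two essential ingredients are Dirichlet's $S$-unit theorem, which provides coordinates on $\OO_S^*$, together with a quantitative $p$-adic subspace theorem (or the Thue--Mahler-style bounds that Evertse develops in his paper).

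First I would attach to each solution a coarse invariant. Let $V = M_K^\infty \cup S$, a set of cardinality $r_1 + r_2 + \#S$. For each $v \in V$ and each solution $(\lambda,\mu)$ of $\lambda + \mu = 1$, the identity relates three nonzero elements $1, \lambda, -\mu$ summing to zero, so at $v$ one of $|1|_v, |\lambda|_v, |\mu|_v$ is dominant. Recording the dominant coordinate at each $v$ defines a ``profile'' map from the solution set into a set of size at most $3^{|V|}$, and the product formula forces the other two absolute values at $v$ to lie within a bounded ratio of this dominant one.

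Second, inside a fixed profile one controls solutions by Diophantine approximation. Fix an $\OO_S^*$-basis $\epsilon_1, \dotsc, \epsilon_{r_1+r_2+\#S-1}$ (given by the $S$-unit theorem) and write $\lambda = \zeta \prod_i \epsilon_i^{n_i}$ for a root of unity $\zeta$ and integers $n_i$. The profile fixes which of $1, \lambda, \mu$ is largest at each place of $V$ and hence constrains the signs of the archimedean linear forms $\sum_i n_i \log|\epsilon_i|_v$ and the corresponding $p$-adic forms at $v \in S$. Combined with $\lambda + \mu = 1$, at least one of these linear forms is forced to be small relative to the others, and a quantitative version of the Thue--Mahler / subspace-theorem estimate bounds the number of exponent vectors $(n_i)$ compatible with a given profile by a constant depending only on the dimensions involved.

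The main obstacle is the second step: obtaining a bound per profile that is both uniform and sharp enough to produce the stated constants. The weights $3, 4, 2$ in the exponent should reflect the local dimensions at each type of place---a real place contributing a one-dimensional archimedean logarithm, a complex place a two-dimensional one, and each $\mathfrak{p} \in S$ a one-dimensional $p$-adic component---together with additional factors from the three possible dominance patterns. Repackaging these local bounds uniformly, so that the total count collapses into a single expression of the form $3 \cdot 7^{3r_1+4r_2+2\#S}$, is the technical heart of Evertse's argument; the qualitative finiteness already given by Theorem~\ref{thm:Siegel} is considerably easier, but pinning down the explicit exponential shape requires the delicate geometry-of-numbers bookkeeping that is the real content of the theorem.
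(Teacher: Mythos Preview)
The paper does not prove this theorem. It is stated as a quotation of Evertse's result \cite{Evertse} and no argument is given; the surrounding text simply says ``Perhaps the most elegant theorem on $S$-unit equations is the following result due to Evertse'' and then moves on. So there is no proof in the paper for your proposal to be compared against.

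As for the proposal itself, it is a plausible high-level narrative but it is not a proof. A few specific issues: (i) Evertse's original argument does not proceed via the subspace theorem but via his quantitative work on the Thue--Mahler equation using the hypergeometric method; the profile/subspace-theorem picture you describe is closer in spirit to later approaches of Beukers--Schlickewei and Evertse--Schlickewei--Schmidt, which give different constants. (ii) Your explanation of the weights $3,4,2$ is speculative; in Evertse's paper these constants come out of explicit inequalities in the hypergeometric estimates, not from a clean ``local dimension'' count. (iii) The sketch never actually carries out the bounding step---you acknowledge that ``pinning down the explicit exponential shape \dots is the real content of the theorem,'' which is precisely the part you have not supplied. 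So what you have written is a description of where one expects the difficulty to lie, rather than an argument that overcomes it.
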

For extensive surveys of results on $S$-unit equations, see
\cite{EvertseGyory} and the introduction of \cite{BB}.
Nowadays the $S$-unit equation is often viewed
as $S$-integral points on $\PP^1 \setminus \{0,1,\infty\}$,
allowing for a variety of high-powered approaches
from arithmetic geometry to be applied, e.g.\
\cite{Kim},
\cite{LV},
\cite{Triantafillou2}.

\section{S-Unit Equations and Elliptic Curves} \label{EC}
In this section we explore more fully the relationship between solutions
to $S$-unit equations and certain families of elliptic curves.
A theorem of Shafarevich asserts that given a finite set of
prime ideals $S$ in the ring of integers $\OO_K$
of a number field $K$,
there are only finitely many elliptic curves $E/K$ with good reduction
outside $S$. For illustration we consider a special case of this
problem where $K=\Q$ and $E$ is assumed to have a point
of order $2$. There is no loss of
generality in supposing that $2 \in S$.
We write $S=\{2,p_1,p_2,\dotsc,p_k\}$ where $p_1,\dotsc,p_k$
are distinct odd primes. We may suppose that $E$ has a model
of the form
\[
E \; : \; Y^2=X(X^2+aX+b)
\]
where $a$, $b$ are rational integers, and the discriminant
$\Delta=16b^2(a^2-4b) \ne 0$. Moreover, we can choose $a$, $b$
so that this model is minimal away from $2$. As $E$
has good reduction away from $S$ we see that
\[
b^2(a^2-4b)
=\pm 2^{\alpha_0}p_1^{\alpha_1} \ldots
p_k^{\alpha_k}
\]
 where $\alpha_i$ are nonnegative integers. Then
\[
b=\pm
2^{\beta_0} p_1^{\beta_1}\ldots p_k^{\beta_k}, \qquad
a^2-4b= \pm
2^{\alpha_0-2\beta_0}p_1^{\alpha_1-2\beta_1}p_2^{\alpha_2-2\beta_2}\ldots
p_k^{\alpha_k-2\beta_k}
\]
for some integers $0 \le \beta_i \le \alpha_i$.
Note that this gives a solution to the equation
$x+y=z^2$ with
\[
\begin{cases}
x=\pm 2^{\alpha_0-2\beta_0}p_1^{\alpha_1-2\beta_1}p_2^{\alpha_2-2\beta_2}\ldots
p_k^{\alpha_k-2\beta_k} \in \OO_S^*, \\
y= 4b=\pm 2^{\beta_0+2} p_1^{\beta_1}\ldots
p_k^{\beta_k} \in \OO_S^*,\\
z=a \in \Z.
\end{cases}
\]
More generally the task of determining elliptic curves
with a point of order $2$ over a number field $K$
and a good reduction outside a finite set of prime ideals $T$
reduces to solving an equation of the form
\begin{equation}\label{eqn:square}
 x+y=z^2, \qquad x,~y \in \OO_S^*, \qquad z \in K
\end{equation}
where $S$ is a suitable enlargement of $T$ that
takes account of the class group of $K$.
An algorithm for solving equations of the form
\eqref{eqn:square} is given in de Weger's thesis \cite{deWeger}.
See also \cite{BGR}.

%These solutions arise naturally when an effective version of the  theorem of
%Shafarevich wanted to be made. This is the theorem of Shafereich on the
%finiteness of isomorphism classes of elliptic curves over a number field $K$
%with good reduction outside a given finite set of primes. Here are the details
%of this argument: Say we want to find all elliptic curves $E$ over the rational
%numbers $\Q$ with nontrivial rational $2$-torsion and good reduction outside
%the finite set of primes $S=\{p_1, p_2, \ldots, p_k\}.$ We study $E/\Q$ via the
%following Weierstrass Equation $$E: y^2=x^3+ax^2+bx$$ where $a,b \in \Q,$ by
%assumption on good reduction $b^2(a^2-4b)=\pm 2^{\alpha_0}p_1^{\alpha_1} \ldots
%p_k^{\alpha_k}$ where $\alpha_i$ are nonnegative integers. Let $b=\pm
%2^{\beta_0} p_1^{\alpha_1}\ldots p_k^{\alpha_k}$. Then $a^2-4b= \pm
%2^{\alpha_0-2\beta_0}p_1^{\alpha_1-beta_1}p_2^{\alpha_2-\beta_2}\ldots
%p_k^{\alpha_k-\beta_k}$ and this gives an equation of the form $x+y=z^2$ with
%$x=\pm 2^{\alpha_0-2\beta_0}p_1^{\alpha_1-beta_1}p_2^{\alpha_2-\beta_2}\ldots
%p_k^{\alpha_k-\beta_k}, y= 4b=\pm 2^{\beta_0+2} p_1^{\alpha_1}\ldots
%p_k^{\alpha_k}, z=a$ and $S=\{2,p_1, \ldots, p_k\}. $

\bigskip

We now look at a similar problem that arises in the context
of understanding Fermat-type equations over number fields.
Let $K$ be a number field. An elliptic curve $E/K$
is said to have \textbf{potentially good reduction}
at a prime ideal $\mathfrak{q}$ of $\OO_K$ if there is a finite
extension $L/K$ so that $E/L$ has good reduction
at every prime ideal $\mathfrak{q}^\prime$ of $\OO_L$ above $\mathfrak{q}$.
It is possible to show that $E/K$ has potentially good
reduction at $\mathfrak{q}$ if and only if $\ord_\mathfrak{q}(j(E)) \ge 0$
where $j(E)$ is the $j$-invariant of $E$.
Now let $S$ be a finite set of prime ideals of $\OO_K$.
We are interested in the set $\mathcal{E}_S$ of elliptic
curves $E/K$ with full $2$-torsion and potentially
good reduction outside $S$. Here we suppose that $S$
includes all the prime ideals of $\OO_K$ above $2$.
We follow the treatment in \cite{H}.
By assumption the elliptic curves we are
dealing with are of the form
\begin{equation}\label{eqn:twotorsion}
E \; :\;  Y^2=(X-a_1)(X-a_2)(X-a_3)
\end{equation}
 where the $a_i \in K$ are
distinct.
Let $\lambda=(a_3-a_1)/(a_2-a_1) \in \PP^1(K) - \{0,1,\infty\}$.
This is called the \textbf{$\lambda$-invariant} of $E$.

%Similarly any
%$\mathbb P^1(K)-\{0,1,\infty\}$ can be written as the ratio of three distinct
%elements $a_1, a_2, a_3 \in K$ hence can be associated with an elliptic curve
%over $K$ with full two torsion.

\begin{lemma}\label{lem51}
Let $\mathcal{S}_3$ be the symmetric group on three elements. The action of
$\mathcal{S}_3$ on $\{a_1,a_2,a_3\}$ can be extended to $\mathbb
P^1(K)-\{0,1,\infty\}$.  Under this action the orbit of   $\lambda=(a_3-a_1)/(a_2-a_1) \in \mathbb{P}^1(K)-\{0,1,\infty\}$ is
\begin{equation}\label{eqn:lambdas}
\left\{\lambda,\;
\frac{1}{\lambda},
\; 1-\lambda,\;
\frac{1}{(1-\lambda)},
\;
\frac{\lambda}{(\lambda-1)},
\;
\frac{(\lambda-1)}{\lambda} \right\}.
\end{equation}
\end{lemma}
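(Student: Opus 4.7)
The plan is to prove the lemma by an explicit computation: for each of the six permutations $\sigma \in \mathcal{S}_3$, compute the $\lambda$-invariant attached to the relabelled triple $(a_{\sigma(1)}, a_{\sigma(2)}, a_{\sigma(3)})$ and show it is expressible as a M\"obius transformation of the original $\lambda$. I will then observe that the six resulting transformations form a subgroup of $\mathrm{PGL}_2(K)$ preserving the set $\{0, 1, \infty\} \subset \mathbb{P}^1(K)$, which simultaneously (i) shows that the $\mathcal{S}_3$-action extends to all of $\mathbb{P}^1(K) \setminus \{0, 1, \infty\}$, and (ii) yields the six values in the displayed orbit.

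Concretely, set $\lambda = (a_3 - a_1)/(a_2 - a_1)$. First I would handle the three transpositions. Swapping $a_1 \leftrightarrow a_2$ gives
\[
\frac{a_3 - a_2}{a_1 - a_2} \;=\; -\frac{(a_3 - a_1) - (a_2 - a_1)}{a_2 - a_1} \;=\; 1 - \lambda.
\]
Swapping $a_2 \leftrightarrow a_3$ gives $(a_2 - a_1)/(a_3 - a_1) = 1/\lambda$. For the swap $a_1 \leftrightarrow a_3$, one writes $a_2 - a_3 = (a_2 - a_1)(1 - \lambda)$ and computes $(a_1 - a_3)/(a_2 - a_3) = \lambda/(\lambda - 1)$. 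The two $3$-cycles are then obtained as compositions of two transpositions, and produce $1/(1 - \lambda)$ and $(\lambda - 1)/\lambda$; together with the identity value $\lambda$ these exhaust the orbit displayed in \eqref{eqn:lambdas}.

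To justify the extension to $\mathbb{P}^1(K) \setminus \{0, 1, \infty\}$, I would note that each of the six formulas
$
\lambda,\; 1 - \lambda,\; 1/\lambda,\; \lambda/(\lambda - 1),\; 1/(1-\lambda),\; (\lambda-1)/\lambda
$
defines a M\"obius transformation on $\mathbb{P}^1(K)$. A direct check shows that each of them permutes $\{0, 1, \infty\}$, so the induced maps restrict to automorphisms of $\mathbb{P}^1(K) \setminus \{0, 1, \infty\}$. Moreover, the formulas are closed under composition (as the computation of the $3$-cycles already verifies), so they form a subgroup of $\mathrm{PGL}_2(K)$ of order $6$ through which the symmetric group $\mathcal{S}_3$ acts.

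There is no serious obstacle: the proof is an exercise in bookkeeping. The only mild subtlety is organising the three transposition computations so that the two $3$-cycle cases follow without further work, and verifying that the resulting six M\"obius transformations genuinely preserve $\{0, 1, \infty\}$ (which they do, since each sends these three points among themselves).
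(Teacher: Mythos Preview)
Your proposal is correct and follows the same approach as the paper: an explicit computation of the $\lambda$-invariant under each permutation of $\{a_1,a_2,a_3\}$. The paper's proof is in fact much terser---it works out only the single example $\sigma=(1,2)\mapsto 1-\lambda$ and declares the rest ``a straightforward computation''---so your version is a more complete execution of the same idea, with the added (and welcome) observation that the six resulting M\"obius transformations preserve $\{0,1,\infty\}$, which justifies the extension of the action to all of $\mathbb{P}^1(K)\setminus\{0,1,\infty\}$.
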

\begin{proof}
This is a straightforward computation. For example if
$\sigma \in \mathcal{S}_3$ is the transposition $(1,2)$
then it swaps $a_1$, $a_2$ and keeps $a_3$ fixed. Hence
\[
\sigma(\lambda)=(a_3-a_2)/(a_1-a_2)=1-\lambda.
\]
%Since the $a_i$ are distinct $[a_3-a_1: a_2-a_1] \neq 0$, $1$, $\infty$.
%Let $\sigma \in \mathcal{S}_3$ then $[\sigma(a_3-a_1) :
%\sigma(a_2-a_1)] \neq 1, 0 \; \text{or} \; \infty$ either. Let $\sigma= (12)$,
%then $\sigma (\lambda)= \frac{a_3-a_2}{a_1-a_2}=1-\lambda$. Similarly
%$\sigma(\lambda)=\frac{1}{\lambda}$ if $\sigma = (23)$,
%$\sigma(\lambda)=\frac{\lambda}{\lambda-1}$ if $\sigma = (13)$ ,
%$\sigma(\lambda)=\frac{1}{1-\lambda}$ if $\sigma = (123)$ and
%		$\sigma(\lambda)=\frac{\lambda-1}{\lambda}$ if $\sigma = (132)$. Hence we get the claimed orbit for $\lambda$.
\end{proof}
From now on we think of $\mathcal{S}_3$	as acting on $\PP^1(K)-\{0,1,\infty\}$,
via the six transformations $\lambda \mapsto \lambda$,
$\lambda \mapsto 1/\lambda$,
$\lambda \mapsto 1-\lambda$, \dots
\begin{lemma}\label{lem:inv}
The set of $\lambda$-invariants $\mathbb{P}^1(K)-\{0,1,\infty\}$,
up to equivalence
 under the action of $\mathcal {S}_3$,
is in  one to one correspondence with the set of
elliptic curves over $K$ with full two torsion up to isomorphism
over $\overline{K}$.
\end{lemma}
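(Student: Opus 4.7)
The plan is to construct a bijection in both directions between $(\PP^1(K) \setminus \{0,1,\infty\})/\mathcal{S}_3$ and the set of $\overline{K}$-isomorphism classes of elliptic curves over $K$ with full $2$-torsion. In the forward direction, a class represented by $\lambda$ maps to the Legendre curve $E_\lambda \colon Y^2 = X(X-1)(X-\lambda)$, which is defined over $K$ and has full $2$-torsion (its $2$-torsion $x$-coordinates are $0, 1, \lambda$). In the reverse direction, an $E$ presented as in \eqref{eqn:twotorsion}, with $a_1,a_2,a_3 \in K$ (as is forced by the full $2$-torsion hypothesis), is assigned $\lambda = (a_3-a_1)/(a_2-a_1) \in K$; by Lemma \ref{lem51}, reordering the $a_i$ replaces $\lambda$ only by another member of the same $\mathcal{S}_3$-orbit, so the class of $\lambda$ is a well-defined invariant of $E$. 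A first sanity check is that every $E$ of the form \eqref{eqn:twotorsion} becomes $\overline{K}$-isomorphic to $E_\lambda$ via the substitution $X \mapsto (a_2-a_1)X + a_1$, $Y \mapsto (a_2-a_1)^{3/2}Y$ (which is defined over $\overline{K}$), so the two assignments are mutually inverse at the level of isomorphism classes once the main implication below is established.

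The core content is to prove $E_\lambda \cong_{\overline{K}} E_{\lambda'}$ if and only if $\lambda'$ lies in the $\mathcal{S}_3$-orbit of $\lambda$. For the ``if'' direction I would exhibit, for each of the six transformations in \eqref{eqn:lambdas}, an explicit $\overline{K}$-isomorphism of the form $X \mapsto u^2 X + r$, $Y \mapsto u^3 Y$; for instance $\lambda \mapsto 1-\lambda$ is realized by $X \mapsto 1-X$ (combined with a sign change on $Y$), while $\lambda \mapsto 1/\lambda$ requires the scale $u = \lambda^{-1/2} \in \overline{K}$. For the ``only if'' direction the cleanest route is through the $j$-invariant: a direct computation gives $j(E_\lambda) = 256(\lambda^2-\lambda+1)^3 / (\lambda^2 (\lambda-1)^2)$, and two elliptic curves over $\overline{K}$ are isomorphic iff they share a $j$-invariant. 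One then checks that $j(\lambda)$ is invariant under the $\mathcal{S}_3$-action of \eqref{eqn:lambdas}, and since $j$ is a rational function of degree $6$ on $\PP^1$, its generic fibres must coincide with the $\mathcal{S}_3$-orbits.

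The main obstacle is the ``only if'' direction. The $j$-invariant argument requires both verifying $\mathcal{S}_3$-invariance (straightforward but tedious) and separately handling the two exceptional orbits where $|\mathcal{S}_3 \cdot \lambda| < 6$, namely $\lambda \in \{-1, 1/2, 2\}$ (giving $j=1728$) and $\lambda \in \{-\zeta_3, -\zeta_3^2\}$ (giving $j=0$), since at these points a naive fibre-size count fails. An alternative, more direct route avoids the $j$-invariant altogether: any $\overline{K}$-isomorphism between two short Weierstrass models of the shape $Y^2 = X(X-1)(X-\lambda)$ is necessarily given by $X \mapsto u^2 X + r$, $Y \mapsto u^3 Y$, and such a map permutes the $2$-torsion $x$-coordinates $\{0,1,\lambda\}$; tracing through the six possible permutations recovers exactly the six transformations of \eqref{eqn:lambdas}, closing the argument.
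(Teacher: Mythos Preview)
Your proposal is correct and follows the same approach as the paper: the paper simply cites Silverman's Proposition~III.1.7 and records the two maps (the $\lambda$-invariant $(a_3-a_1)/(a_2-a_1)$ and the Legendre curve $E_\lambda$), while you have unpacked that citation by giving the explicit change of variables and the standard $j$-invariant/2-torsion permutation argument for the \lq\lq only if\rq\rq\ direction. Everything you wrote---the correspondence maps, the $j$-formula, and the handling of the exceptional orbits at $j=0,1728$---matches the content of the reference the paper defers to.
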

\begin{proof}
This is essentially Proposition III.1.7 in Silverman's book \cite{Sil}.
The correspondence is induced
by the association $E \mapsto \lambda=(a_3-a_1)/(a_2-a_1)$
where $E$ has the form \eqref{eqn:twotorsion}.
The inverse is given by sending the class of $\lambda \in \PP^1(K) - \{0,1,
\infty\}$ to the $\overline{K}$-isomorphism class of the
Legendre elliptic curve
\[
E_\lambda \; : \; Y^2=X(X-1)(X-\lambda).
\]
\end{proof}
%\begin{proof} This follows from the previous lemma and the fact that any elliptic curve over $K$ with full two torsion is isomorphic to the elliptic curve given by the Legendre form $E_{\lambda}:y^2= x(x-1)(x-\lambda)$ \cite{Sil}.
%\end{proof}

Let
\[
W_S=\{(\lambda, \mu) : \lambda+\mu=1, \lambda, \mu \in \OO_S^*\}
\]
 be
the set of solutions of the $S$-unit equation \eqref{eqn:sunit}.
Recall that $\mathcal{E}_S$ is the set of
elliptic curves over $K$ with full $2$-torsion and having potentially
good reduction outside $S$. If $E_1, E_2$ are in $\mathcal{E}_S$ and
isomorphic over the algebraic closure of $K$ then we say that $E_1, E_2$ are
\textbf{equivalent}.
\begin{lemma}\label{lem:correspondence}
Suppose $S$ is a finite set of prime ideals of $\OO_K$
that includes all the primes above $2$.
Then
$\mathcal{S}_3$ acts on $W_S$ via $\pi(\lambda, \mu)=(\pi(\lambda),
1-\pi(\lambda))$; here $\pi(\lambda)$ denotes the image of $\lambda$
under $\pi$ as in Lemma~\ref{lem51}.
Moreover the $\mathcal{S}_3$-orbits in $W_S$ are in bijection with the
equivalence classes in $\mathcal{E}_S$.
\end{lemma}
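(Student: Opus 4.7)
The plan is to prove the lemma in two stages: first verify that the prescribed formula defines an action of $\mathcal{S}_3$ on $W_S$, and second build a bijection between the orbit set and the equivalence classes in $\mathcal{E}_S$ using the Legendre correspondence of Lemma~\ref{lem:inv}.

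For the action, since each $\pi \in \mathcal{S}_3$ acts on $\lambda$ by one of the six M\"{o}bius transformations listed in \eqref{eqn:lambdas}, I need only verify that for $(\lambda,\mu) \in W_S$ the new value $\pi(\lambda)$ and $1-\pi(\lambda)$ both lie in $\OO_S^*$. This reduces to a handful of small identities: for instance $1-\lambda^{-1} = -\mu \lambda^{-1} \in \OO_S^*$ and $1-\lambda/(\lambda-1) = -\mu^{-1} \in \OO_S^*$, and similarly for the remaining transformations (or by composition). Once this is checked, that $\mathcal{S}_3$ acts is automatic from Lemma~\ref{lem51}.

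For the bijection, I would define $\Phi$ on orbit representatives by $(\lambda,\mu) \mapsto [E_\lambda]$, where $E_\lambda : Y^2 = X(X-1)(X-\lambda)$ is the Legendre curve. By Lemma~\ref{lem:inv}, $\Phi$ is well-defined on orbits, injective into $\overline{K}$-isomorphism classes, and every elliptic curve over $K$ with full $2$-torsion arises from some $\lambda \in \PP^1(K) - \{0,1,\infty\}$. What remains is to check that $\Phi$ and its inverse respect the $S$-constraints, namely that (a) $E_\lambda \in \mathcal{E}_S$ whenever $(\lambda,1-\lambda) \in W_S$, and (b) the $\lambda$-invariant of any $E \in \mathcal{E}_S$ admits a representative in its $\mathcal{S}_3$-orbit lying in $W_S$. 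Both sides rest on the explicit formula
\begin{equation*}
j(E_\lambda) = \frac{2^8 (\lambda^2 - \lambda + 1)^3}{\lambda^2(1-\lambda)^2}
\end{equation*}
combined with the fact that potentially good reduction at $\fp$ is equivalent to $\ord_\fp(j) \geq 0$.

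The main obstacle I anticipate is direction (b): deducing from $\ord_\fp(j(E_\lambda)) \geq 0$ (at every $\fp \notin S$, hence $\fp \nmid 2$) that $\lambda$ and $1-\lambda$ are actually $S$-units, rather than merely $S$-integral. I would handle this by a case analysis on the signs of $\ord_\fp(\lambda)$ and $\ord_\fp(\lambda-1)$: using $\ord_\fp(\lambda^2-\lambda+1) = 2\ord_\fp(\lambda)$ when $\ord_\fp(\lambda) < 0$ and $\ord_\fp(\lambda^2-\lambda+1) = 0$ when $\ord_\fp(\lambda) \geq 0$ and $\ord_\fp(\lambda-1) \geq 0$, a short valuation computation shows that any nonzero choice of $\ord_\fp(\lambda)$ or $\ord_\fp(\lambda-1)$ forces $\ord_\fp(j) < 0$, contradicting potentially good reduction. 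Therefore $\ord_\fp(\lambda) = \ord_\fp(1-\lambda) = 0$ for every $\fp \notin S$, so $(\lambda, 1-\lambda) \in W_S$; since the $\mathcal{S}_3$-orbit of $\lambda$ in $\PP^1(K)-\{0,1,\infty\}$ is the fibre of $\Phi$ over $[E]$, this gives the desired surjectivity and completes the bijection.
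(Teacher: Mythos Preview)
Your proposal is correct and follows essentially the same approach as the paper: the paper's proof simply says ``this is essentially routine computation'' and defers to \cite{H}, noting that the bijection is induced by the maps in Lemma~\ref{lem:inv}. Your argument via the Legendre form and the explicit $j$-invariant formula, together with the valuation case analysis showing that $\ord_\fp(j(E_\lambda)) \ge 0$ for $\fp \notin S$ forces $\ord_\fp(\lambda)=\ord_\fp(1-\lambda)=0$, is precisely the routine computation the paper has in mind.
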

\begin{proof}
This is essentially routine computation; for full details see
\cite[Section 5]{H}. The bijection is induced by
the maps in Lemma~\ref{lem:inv}.
\end{proof}

\section{$S$-Unit Equations and Fermat}
In this section we state a theorem that relates the Fermat equation
over totally real fields to $S$-unit equations, following \cite{FS}.
Generalizations to fields with complex embeddings are known
and we discuss them in later sections, but the statement is
easier in the totally real setting.
In some cases we will need the Eichler--Shimura conjecture which
we now state.
\begin{conj}[\lq\lq Eichler--Shimura\rq\rq]\label{conj:ES}
Let $K$ be a totally real field. Let $\ff$ be a Hilbert newform over $K$
of level $\cN$ and parallel weight $2$, and
rational Hecke eigenvalues.
Then there is an elliptic curve $E_\ff/K$ with conductor $\cN$
having the same $\mathrm{L}$-function as $\ff$.
\end{conj}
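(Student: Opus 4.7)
The plan is to build the elliptic curve $E_\ff$ from the compatible system of $\lambda$-adic Galois representations attached to $\ff$, and then argue that when the Hecke eigenvalues are rational this system descends to an elliptic curve rather than just a higher-dimensional abelian variety of $\GL_2$-type.

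First I would invoke the construction of Galois representations for Hilbert modular forms, due to Taylor (building on Carayol, Wiles, Blasius--Rogawski, and Ohta), which attaches to $\ff$ a compatible system $\rho_{\ff,\lambda} \colon G_K \to \GL_2(\overline{\Q}_\lambda)$ whose $L$-function matches that of $\ff$ and whose determinant (since $\ff$ has parallel weight $2$ and trivial character) is the $\lambda$-adic cyclotomic character. The rationality assumption on the Hecke eigenvalues $c_\mathfrak{q}(\ff)$ ensures the trace of Frobenius at each good prime lies in $\Z$, so each $\rho_{\ff,\ell}$ is defined over $\Q_\ell$ (not just a finite extension). The goal then is to realize this compatible system as the system $\{T_\ell E_\ff \otimes \Q_\ell\}_\ell$ of some elliptic curve $E_\ff/K$ of conductor $\cN$.

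The decisive step is the geometric construction of $E_\ff$. When $\cN$ is \emph{not squarefull}, one picks a prime $\mathfrak{q}$ with $\ord_\mathfrak{q}(\cN)=1$ and applies the Jacquet--Langlands correspondence to transfer $\ff$ to an automorphic form on the multiplicative group of a suitable quaternion algebra $B/K$ ramified at $\mathfrak{q}$ and at all infinite places but one. The associated Shimura variety $X^B$ is a \emph{curve}, and $\ff$ cuts out a quotient $A_\ff$ of its Jacobian; the rationality of the Hecke eigenvalues forces $\dim A_\ff = 1$, which gives the desired $E_\ff$. Checking that the conductor equals $\cN$ and that the $L$-function matches then follows from Carayol's local--global compatibility results. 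This is the content of Hida's theorem alluded to in the excerpt.

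The main obstacle is the squarefull case, when no prime divides $\cN$ exactly once (and $[K:\Q]$ has the wrong parity, so one cannot even drop a prime). Then there is no available Shimura curve of the right level, and the Jacobian-quotient construction fails outright; the Hilbert modular variety itself has dimension $[K:\Q] > 1$, and the Hodge structure one wants on an elliptic-curve motive does not appear directly in its cohomology. One can still hope to realize $E_\ff$ via a Faltings--Serre style argument: if one could produce \emph{any} elliptic curve $E/K$ whose mod-$\ell$ representation agrees with $\overline{\rho}_{\ff,\ell}$ for enough primes $\ell$, then modularity lifting together with rigidity would force $E$ to have the right $L$-function. But producing such a candidate $E$ purely from $\ff$ is precisely the part that remains genuinely open, and this is why Conjecture~\ref{conj:ES} is still a conjecture rather than a theorem in the squarefull case.
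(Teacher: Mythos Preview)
The statement is presented in the paper as a \emph{conjecture}, not a theorem; the paper offers no proof, so there is nothing to compare your attempt against.

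Your write-up is likewise not a proof, and you say so explicitly in the final paragraph. What you have given is an accurate survey of the state of the art: you outline the construction that works when $\cN$ is not squarefull (Jacquet--Langlands transfer to a quaternionic Shimura curve, then take the $\ff$-isotypic quotient of its Jacobian, which is one-dimensional because the Hecke field is $\Q$), and you correctly identify why the squarefull, even-degree case is genuinely open---there is no Shimura \emph{curve} available, and no known way to produce a candidate elliptic curve from $\ff$ alone. This is consistent with the paper's own remark (citing Hida) that the conjecture is known when $\cN$ is not squarefull, and with the paper's assumption (ES), which lists ``$[K:\Q]$ odd'' as a separate sufficient hypothesis precisely because in that case one can choose the quaternion algebra ramified only at infinite places regardless of $\cN$. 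Your account of the mechanism behind the known cases is more detailed than anything the paper itself provides, and is correct as far as it goes; it simply does not, and cannot, constitute a proof of the full conjecture.
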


Let $K$
be a totally real field, and let
\begin{equation}\label{eqn:ST}
\begin{gathered}
S=\{ \mP \; :\; \text{$\mP$ is a prime ideal of $\OO_K$ above $2$}\}, \\
T=\{ \mP \in S \; : \;
f(\mP/2)=1\},
\qquad
U=\{ \mP \in S \; : \;
3 \nmid \ord_\mP(2) \}.
\end{gathered}
\end{equation}
Here $f(\mP/2)$ denotes the residual degree of $\mP$.
We need an assumption, which we refer to as (ES):
\[ \label{ES}
\text{\bf (ES)} \qquad
\left\{
\begin{array}{lll}
\text{either $[K:\Q]$ is odd;}\\
\text{or $T \ne \emptyset$;}\\
\text{or Conjecture~\ref{conj:ES} holds for $K$.}
\end{array}
\right.
\]
\begin{thm}[Freitas and Siksek]\label{thm:FermatGen}
Let $K$ be a totally real field satisfying
 (ES). Let $S$, $T$ and $U$ be as in \eqref{eqn:ST}.
Write $\OO_S^*$ for the group of $S$-units of $K$.
Suppose that for every solution $(\lambda,\mu)$ to the $S$-unit equation
\eqref{eqn:sunit}
%\begin{equation}\label{eqn:sunit}
%\lambda+\mu=1, \qquad \lambda,\, \mu \in \OO_S^* \, .
%\end{equation}
there is
\begin{enumerate}
\item[(A)] either some $\mP \in T$ that satisfies
$\max\{ \lvert \ord_{\mP} (\lambda) \rvert, \lvert \ord_{\mP}(\mu) \rvert \}
\le 4 \ord_{\mP}(2)$,
\item[(B)] or some $\mP \in U$ that satisfies both
$\max\{ \lvert \ord_{\mP} (\lambda) \rvert, \lvert \ord_{\mP}(\mu) \rvert \}
\le 4 \ord_{\mP}(2)$, and
$\ord_{\mP}(\lambda \mu) \equiv \ord_{\mP}(2) \pmod{3}$.
\end{enumerate}
Then the asymptotic Fermat conjecture holds over
$K$.
\end{thm}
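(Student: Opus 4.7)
The plan is to follow the modular approach outlined in Section~\ref{sec:gen}, specialising each step to exploit conditions (A) and (B), and producing a uniform bound $\mathcal{B}_K$ on the exponent $p$. Suppose $(x,y,z) \in \OO_K^3$ is a non-trivial solution to \eqref{eqn:Fermat} with $p$ large. After scaling by representatives of the class group, we may assume $(x,y,z)$ is essentially coprime (up to a finite list of ideals depending only on $K$), and we attach the Frey elliptic curve
\[
E = E_{x,y,z} \; : \; Y^2 = X(X-x^p)(X+y^p).
\]
By construction $E$ has full $2$-torsion and discriminant $16(xyz)^{2p}$; in particular $E$ has multiplicative reduction at every odd prime dividing $xyz$.

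First I would invoke step (II) of Section~\ref{sec:gen} to conclude that $\bar\rho_{E,p}$ is irreducible for $p$ larger than an absolute constant depending only on $K$ (using Merel-style uniform bounds, since $E$ is close to semistable). Then step (III) — the modularity results of Freitas, Le Hung and Siksek over totally real fields \cite{FLHS} — yields, for $p$ large, that $\bar\rho_{E,p}$ arises from a Hilbert eigenform over $K$ of parallel weight $2$. Applying standard level-lowering, the level can be taken to be a fixed ideal $\cN$ supported on $S$, independent of the solution. If $\bar\rho_{E,p}$ arises from an irrational newform then, exactly as in Section~\ref{sec:SM}, comparing traces of Frobenius at a fixed auxiliary prime gives a bound on $p$, so we may assume the newform $\ff$ is rational. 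Assumption (ES) is exactly what is needed to apply Conjecture~\ref{conj:ES} (either as a theorem, in the cases $[K:\Q]$ odd or $T \ne \emptyset$, or by hypothesis), producing an elliptic curve $E'/K$ of conductor $\cN$ with $\bar\rho_{E,p} \sim \bar\rho_{E',p}$.

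Next I would transfer the $2$-torsion structure from $E$ to $E'$. Since $E$ has full $2$-torsion and $\bar\rho_{E,p} \sim \bar\rho_{E',p}$, an argument in the spirit of \cite[Lemma 7.5]{Haluk} (used in Section~\ref{sec:SM}) shows that $4 \mid \#E'(\F_\mathfrak{q})$ for almost all primes $\mathfrak{q}$, hence $E'$ is isogenous to an elliptic curve with full $2$-torsion, which we again call $E'$. Because the conductor of $E'$ is supported on $S$, in particular $E'$ has good reduction outside $S$ and so lies in $\mathcal{E}_S$. By Lemma~\ref{lem:correspondence} there is a solution $(\lambda,\mu)$ of the $S$-unit equation \eqref{eqn:sunit} such that $E'$ is $\overline{K}$-isomorphic to $E_\lambda \,:\, Y^2 = X(X-1)(X-\lambda)$, and by direct computation
\[
\ord_\mP(j(E')) = \ord_\mP\!\left(\frac{2^8(\lambda^2-\lambda+1)^3}{\lambda^2 \mu^2}\right), \qquad \mP \in S.
\]

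The heart of the argument, and the main obstacle, is the final step: combining the congruence $\bar\rho_{E,p} \sim \bar\rho_{E',p}$ at a prime $\mP$ above $2$ with the valuation data of $(\lambda,\mu)$ to produce a contradiction. The Frey curve $E$, having discriminant $16(xyz)^{2p}$, has potentially multiplicative reduction at every $\mP \in S$ where $\mP \mid xyz$, with $\ord_\mP(j(E))$ a large negative multiple of $p$. Standard Frey-curve machinery at primes above $2$ (as in \cite{FS}) then forces, for every $\mP \in T$, that either $\ord_\mP(j(E')) \equiv 0 \pmod{p}$ or $\ord_\mP(j(E')) \ge -C \cdot \ord_\mP(2)$ for an explicit constant $C$ (one checks $C = 4$ suffices); for $p$ larger than a bound depending only on $K$, this pins down $|\ord_\mP(\lambda)|$ and $|\ord_\mP(\mu)|$ to be either zero or strictly greater than $4\,\ord_\mP(2)$. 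Hypothesis (A) then asserts that for the solution $(\lambda,\mu)$ corresponding to our $E'$, some $\mP \in T$ witnesses the bounded alternative, forcing $\ord_\mP(\lambda) = \ord_\mP(\mu) = 0$ and hence potentially good reduction of $E'$ at $\mP$; this is incompatible with $\bar\rho_{E,p} \sim \bar\rho_{E',p}$ at the degree $1$ prime $\mP$ (where the residue field $\F_2$ leaves no room for the necessary unramified character twist). Hypothesis (B) handles the remaining case $T = \emptyset$ through a cubic-twist refinement at $\mP \in U$: the extra congruence $\ord_\mP(\lambda\mu) \equiv \ord_\mP(2) \pmod{3}$ eliminates the exceptional conductor exponents that escape the (A)-style dichotomy when $3 \mid \ord_\mP(2)$. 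In either case $p$ is bounded by a constant depending only on $K$, proving the asymptotic Fermat conjecture over $K$.
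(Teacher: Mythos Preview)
Your outline matches the paper's proof sketch step for step: Frey curve, irreducibility for large $p$, modularity and level lowering to a fixed level supported on $S$, elimination of irrational eigenforms, hypothesis (ES) to pass to an elliptic curve $E'$, transfer of full $2$-torsion, and Lemma~\ref{lem:correspondence} producing the $S$-unit solution $(\lambda,\mu)$. Where your account drifts is the mechanics of the final inertia comparison. The paper does not extract a dichotomy ``$\lvert\ord_\mP(\lambda)\rvert$ is either $0$ or $>4\,\ord_\mP(2)$'' from the Frey side and then use (A) to force the valuations to vanish; indeed $\ord_\mP(\lambda)=\ord_\mP(\mu)=0$ is impossible for $\mP\in T$, since over the residue field $\F_2$ two units cannot sum to $1$. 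The argument is rather the direct one: condition~(A) at $\mP\in T$ is precisely the statement $\ord_\mP(j(E'))\ge 0$, so $E'$ has potentially good reduction there and $\overline\rho_{E',p}(I_\mP)$ has bounded order; on the other hand, because $f(\mP/2)=1$ one may permute $(x,y,z)$ so that $\mP\mid z$, and then the Frey curve has potentially multiplicative reduction at $\mP$ with $p\nmid\ord_\mP(j(E))$, so $\overline\rho_{E,p}(I_\mP)$ contains an element of order $p$. That is the mismatch. Likewise for (B): the congruence $\ord_\mP(\lambda\mu)\equiv\ord_\mP(2)\pmod 3$ controls the valuation of the minimal discriminant of $E'$ modulo $3$ and hence its inertia image at $\mP\in U$; it is an image-of-inertia comparison in the potentially-good case, not a cubic-twist argument.
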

\begin{proof}[Proof Sketch]
The proof largely follows the strategy sketched in Sections~\ref{sec:SM}
and~\ref{sec:gen}. Write $E$ for the Frey curve associated
to a non-trivial solution to the generalized Fermat equation \eqref{eqn:Fermat}.
The strategy relates $\overline{\rho}_{E,p}$ to $\overline{\rho}_{F,p}$
where $F$ is an elliptic curve defined over $K$ with full $2$-torsion
and conductor $\mathcal{N}$ which does not depend on the solution
to the Fermat equation but only on the field $K$.
Inspired by ideas of Kraus \cite{Kraus}, and of Bennett and Skinner
\cite{BennettSkinner},
Freitas and Siksek study the possibilities for the image
of inertia $\overline{\rho}_{E,p}(I_\mP)$.
Since the representations $\overline{\rho}_{E,p}$
and $\overline{\rho}_{F,p}$ are isomorphic this
yields information about the elliptic curve $F$.
In particular they  deduce that $F$
has potentially good reduction at all primes outside $S$.
Lemma~\ref{lem:correspondence} relates $F$ to a solution
$(\lambda,\mu)$ of the $S$-unit equation~\eqref{eqn:sunit}.
The theorem follows from examining the possibilities for
$\overline{\rho}_{E,p}(I_\mP)$ and $\overline{\rho}_{F,p}(I_\mP)$
at the primes $\mP \in T$, $U$ and relating these to the solution
$(\lambda,\mu)$ of the $S$-unit equation \eqref{eqn:sunit}
corresponding to $F$. If either of hypotheses (A), (B) of the theorem
is satisfied then there will exist a prime $\mP$
such that
$\overline{\rho}_{E,p}(I_\mP) \not \cong \overline{\rho}_{F,p}(I_\mP)$,
and therefore the representations
$\overline{\rho}_{E,p}$ and $\overline{\rho}_{F,p}$
are non-isomorphic, giving a contradiction.
\end{proof}
We point out that a generalization of Theorem~\ref{thm:FermatGen}
to  general number fields is given by \c{S}eng\"{u}n
and Siksek \cite{Haluk}, assuming standard conjectures
stated in the following section.

\begin{example}
Let $K=\Q(\zeta_{16})^+=\Q\left(\sqrt{2+\sqrt{2}}\right)$.
This is a degree $4$ totally real field in which  $2$ is totally ramified:
$2 \OO_K=\mP^4$ where $\mP=\sqrt{2+\sqrt{2}} \cdot \OO_K$.
In particular, $S=T=\{\mP\}$ in the above notation.
As stated in Example~\ref{ex:Smart} the $S$-unit equation
\eqref{eqn:sunit} has $585$ solutions. It turns out that they
all satisfy condition (A) of the theorem. Hence
the asymptotic Fermat conjecture holds for $K$.
\end{example}

Through a detailed study of solutions to $S$-unit equations
over real quadratic fields, Freitas and Siksek \cite{FS}
prove the following, which in essence says that
the asymptotic Fermat conjecture holds for almost
all real quadratic fields.
\begin{thm}[Freitas and Siksek]\label{thm:density}
%Assume the Eichler--Shimura conjecture.
Let $\mathbb{N}^{\mathrm{sf}}$ denote the set of squarefree
natural numbers $>1$. Let $\mathcal{F}$ be the subset of $d \in \mathbb{N}^{\mathrm{sf}}$
for which the asymptotic Fermat conjecture holds over $\Q(\sqrt{d})$.
Then
\[
\liminf_{X \rightarrow \infty}
\frac{\# \{ d \in \mathcal{F} \; :\;  d \le X \}}{
\# \{d \in
\mathbb{N}^{\mathrm{sf}} \; : \; d \le X\}} \ge 5/6.
\]
If we assume the Eichler--Shimura conjecture then
\[
\lim_{X \rightarrow \infty}
\frac{\# \{ d \in \mathcal{F} \; :\;  d \le X \}}{
\#\{d \in
\mathbb{N}^{\mathrm{sf}} \; : \; d \le X\}} =1.
\]
\end{thm}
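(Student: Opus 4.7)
The plan is to invoke Theorem~\ref{thm:FermatGen} with $K = \Q(\sqrt{d})$ and to partition the squarefree integers $d > 1$ according to the splitting of $2$ in $\OO_K$. The relevant cases are governed by $d \bmod 8$: if $d \equiv 1 \pmod{8}$ then $2$ splits, giving $S = T = \{\mP_1, \mP_2\}$ with $\ord_{\mP_i}(2) = 1$; if $d \equiv 2, 3, 6, 7 \pmod{8}$ then $2$ ramifies, giving $S = T = \{\mP\}$ with $\ord_\mP(2) = 2$; and if $d \equiv 5 \pmod{8}$ then $2$ is inert, giving $S = \{\mP\}$, $T = \emptyset$, $U = \{\mP\}$, with residue field $\OO_K/\mP \cong \F_4$. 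In the split and ramified cases $T \ne \emptyset$, so hypothesis \textbf{(ES)} of Theorem~\ref{thm:FermatGen} holds unconditionally; in the inert case $T = \emptyset$ and $[K:\Q] = 2$ is even, so \textbf{(ES)} requires Conjecture~\ref{conj:ES}.

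Next I would verify condition (A) or (B) of Theorem~\ref{thm:FermatGen} for every solution $(\lambda,\mu)$ of the $S$-unit equation $\lambda + \mu = 1$. In the split and ramified cases the residue field at each $\mP \in T$ is $\F_2$, so reducing mod $\mP$ the relation $\bar{\lambda} + \bar{\mu} = 1$ in $\F_2$ forces $\{\bar{\lambda}, \bar{\mu}\} = \{0, 1\}$. Using the $\mathcal{S}_3$-symmetry of Lemma~\ref{lem:correspondence} one may normalize so that $\ord_\mP(\lambda), \ord_\mP(\mu) \ge 0$; a case analysis of the few ways in which $\max\{|\ord_\mP(\lambda)|, |\ord_\mP(\mu)|\}$ could exceed $4 \ord_\mP(2)$ either rules such solutions out or confines the offending $d$ to a density-zero set, verifying (A). In the inert case the residue field is the larger $\F_4$ and one works with (B) instead, exploiting the congruence $\ord_\mP(\lambda \mu) \equiv \ord_\mP(2) \pmod{3}$ together with $\bar{\lambda} + \bar{\mu} = 1$ in $\F_4$ to isolate the exceptional solutions.

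A standard elementary sieve shows that among squarefree $d > 1$ each of the six admissible residues in $\{1, 2, 3, 5, 6, 7\} \pmod{8}$ occurs with natural density exactly $1/6$. Hence the split and ramified cases together account for density $5/6$ of squarefree $d$ and yield the unconditional $\liminf \ge 5/6$, while the inert case accounts for the remaining density $1/6$ and, upon invoking Eichler--Shimura, upgrades the $\liminf$ to $\lim = 1$.

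The hard part will be the $S$-unit analysis rather than the density bookkeeping: condition (A)/(B) must hold for \emph{every} solution of the $S$-unit equation attached to a given $d$, so isolated high-valuation solutions cannot be averaged away. Uniform control of such solutions in $d$ requires understanding the reduction of the fundamental unit $\varepsilon_d$ of $\Q(\sqrt{d})$ modulo $\mP$, together with an argument showing that only a density-zero set of $d$ admits an exceptional $S$-unit solution. This last step, carried out in \cite{FS}, is what makes the unconditional bound sharp at $5/6$ and the conditional bound equal to $1$.
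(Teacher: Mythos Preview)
The paper does not prove this theorem; it states it as a result of Freitas and Siksek and cites \cite{FS}. So there is no in-paper argument to compare against directly, only the strategy one can infer from Theorem~\ref{thm:FermatGen} and from \cite{FS}.

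Your structural outline matches that strategy. The $5/6$ versus $1/6$ split is exactly the dichotomy $T\ne\emptyset$ (2 splits or ramifies, i.e.\ $d\not\equiv 5\pmod 8$) versus $T=\emptyset$ (2 inert, $d\equiv 5\pmod 8$), and this is precisely what governs whether hypothesis \textbf{(ES)} holds without assuming Conjecture~\ref{conj:ES}. Your density computation for the residue classes is correct.

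Where your sketch is too thin is the middle paragraph. The observation that $\bar\lambda+\bar\mu=1$ in $\F_2$ and the $\mathcal{S}_3$-normalization do not, on their own, bound $\max\{\lvert\ord_\mP(\lambda)\rvert,\lvert\ord_\mP(\mu)\rvert\}$; the quantity $m_{\lambda,\mu}$ is $\mathcal{S}_3$-invariant, so no normalization reduces it. What \cite{FS} actually does is parametrize $\OO_S^*$ in terms of the fundamental unit and a generator of $\mP$, translate the $S$-unit equation into a small finite list of explicit Thue and Thue--Mahler type equations over $\Z$, solve those, and then read off for which $d$ an offending solution exists. The exceptional $d$ that emerge are characterized by explicit Diophantine conditions and shown to have density zero. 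Your last paragraph correctly flags this as the hard step, but the phrase ``a case analysis \dots rules such solutions out'' in the second paragraph understates what is required: it is not a local argument at $\mP$ but a global reduction to equations over $\Z$. With that caveat, your proposal is a faithful outline of the approach in \cite{FS}.
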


\subsection{$S$-Unit Equations and $\Z_\ell$-Layers}
In two recent works \cite{FKS} and \cite{FKS2}, Freitas, Kraus
and Siksek prove the asymptotic Fermat conjecture for
the layers of various cyclotomic $\Z_\ell$-extensions of $\Q$.
We first introduce these extensions. Let $\ell$ be
a rational prime. For now let $\ell$ be odd and $n \ge 1$.
The cyclotomic field $\Q(\zeta_{\ell^{n+1}})$ has
a unique subfield of degree $\ell^n$ which we denote
by $\Q_{n,\ell}$. This is a cyclic, totally real
extension of $\Q$ with Galois group $\Z/\ell^n \Z$.
Clearly $\Q_{n,\ell}$ is a subfield of $\Q_{n+1,\ell}$.
The union of these fields is denoted
\[
\Q_{\infty,\ell}=\bigcup_{n=1}^\infty \Q_{n,\ell}
\]
and has Galois group isomorphic to $\Z_\ell$.
This is called the cyclotomic $\Z_\ell$-extension of $\Q$,
and the field $\Q_{n,\ell}$ is called the $n$-th layer
of $\Q_{\infty,\ell}$.

For $\ell=2$ all the above is true with a small adjustment:
we take $\Q_{n,2}=\Q(\zeta_{2^{n+2}})^+$.
In \cite{FKS} the following theorem is proven.
\begin{thm}
The asymptotic Fermat conjecture is true for $\Q_{n,2}$.
\end{thm}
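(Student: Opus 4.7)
The plan is to invoke Theorem~\ref{thm:FermatGen} applied to $K = \Q_{n,2} = \Q(\zeta_{2^{n+2}})^+$. First I would record the local data at $2$: the field is totally real of degree $2^n$, and $2$ is totally ramified with a single prime $\mP$ above it, uniformized by $\pi_n := 2 - (\zeta_{2^{n+2}} + \zeta_{2^{n+2}}^{-1})$; hence $\ord_\mP(2) = 2^n$ and $f(\mP/2) = 1$. Consequently the sets of~\eqref{eqn:ST} are $S = T = U = \{\mP\}$, and hypothesis (ES) holds unconditionally because $T \neq \emptyset$. Theorem~\ref{thm:FermatGen} then reduces the asymptotic Fermat conjecture for $\Q_{n,2}$ to showing that every solution $(\lambda, \mu)$ of the $S$-unit equation $\lambda + \mu = 1$ with $\lambda, \mu \in \OO_S^*$ satisfies condition (A), i.e.
\[
\max\{\lvert \ord_\mP(\lambda) \rvert,\ \lvert \ord_\mP(\mu) \rvert\} \leq 4 \ord_\mP(2) = 2^{n+2}.
\]

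Exploiting the natural symmetries of the $S$-unit equation $(\lambda,\mu) \leftrightarrow (\mu,\lambda)$ and $(\lambda,\mu) \leftrightarrow (\lambda^{-1}, -\mu\lambda^{-1})$ (coming from the $\mathcal{S}_3$-action of Lemma~\ref{lem51}), I would reduce to the case $\ord_\mP(\lambda) \geq \ord_\mP(\mu) \geq 0$. The identity $\ord_\mP(\lambda+\mu) = 0$ then forces $\ord_\mP(\mu) = 0$; if also $\ord_\mP(\lambda) = 0$ the bound is trivial, and otherwise $\mu \in \OO_K^*$ is a genuine global unit with $\ord_\mP(\mu - 1) = \ord_\mP(\lambda)$. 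The whole problem thus collapses to: show $\ord_\mP(u - 1) \leq 2^{n+2}$ for every $u \in \OO_K^* \setminus \{1\}$ with $u \equiv 1 \pmod{\mP}$. To attack this bound I would work in the completion $K_\mP$ using the $\mP$-adic logarithm, which satisfies $\ord_\mP(\log_\mP u) = \ord_\mP(u-1)$ once $\ord_\mP(u-1) > \ord_\mP(2)$. Since $K$ is abelian over $\Q$, Brumer's theorem provides Leopoldt's conjecture, so the images $\log_\mP \epsilon_1, \dots, \log_\mP \epsilon_{2^n - 1}$ of a fundamental system of units are $\Z_2$-linearly independent in $K_\mP$. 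Combining this with an explicit description of the cyclotomic units of $\Q_{n,2}$ (derived from $(1 - \zeta_{2^{n+2}}^k)(1 - \zeta_{2^{n+2}}^{-k})$ for odd $k$), which have finite index in $\OO_K^*$ controlled by the class number, one obtains a concrete description of $\log_\mP(\OO_K^*)$ and hence of the maximal $\mP$-adic depth attained by $u - 1$.

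The main obstacle is translating Leopoldt-type nondegeneracy into the precise numerical bound $2^{n+2}$ that Theorem~\ref{thm:FermatGen} demands. Generic Baker--Yu bounds for linear forms in $p$-adic logarithms depend on the exponents of $u$ in the unit basis and on the unit rank $2^n - 1$, and they are too weak to produce the clean geometric bound we need. The expected way forward, and what gives the argument its cyclotomic flavour, is to exploit the coherent tower structure of $\Q_{\infty,2}/\Q$: the norm-compatible systems of cyclotomic units across the layers $\Q_{m,2}$ for $m \leq n$ allow one to express units at level $n$ via explicit factorisations of $1 - \zeta_{2^{m+2}}^k$ and their Galois conjugates, producing an effective bound on $\ord_\mP(u-1)$ that scales correctly with $\ord_\mP(2) = 2^n$ and therefore verifies condition (A) for every $n$.
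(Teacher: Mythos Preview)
Your reduction in the second paragraph contains a genuine error that makes the target statement false. When you write that the problem ``collapses to: show $\ord_\mP(u-1) \le 2^{n+2}$ for every $u \in \OO_K^* \setminus\{1\}$ with $u \equiv 1 \pmod{\mP}$'', you have silently discarded the constraint that $\lambda = 1-\mu$ must itself lie in $\OO_S^*$. Without that constraint the claimed bound is simply not true: since $f(\mP/2)=1$, every unit is $\equiv 1 \pmod{\mP}$, and repeated squaring sends $\ord_\mP(u-1)$ to infinity. Concretely, for $n=1$ (so $K=\Q(\sqrt{2})$, $\ord_\mP(2)=2$, and the target bound is $8$) one has $\epsilon^{16}-1 = 2^4\cdot 3\cdot 17\cdot \sqrt{2}\cdot \epsilon^8$ for $\epsilon = 1+\sqrt{2}$, giving $\ord_\mP(\epsilon^{16}-1)=9>8$. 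Of course $1-\epsilon^{16}$ is not an $S$-unit (it is divisible by primes above $3$ and $17$), so this does not contradict Theorem~\ref{thm:FermatGen}; but it does show that your Leopoldt/$p$-adic logarithm strategy, which nowhere uses the condition $1-u\in\OO_S^*$, is aimed at a false statement. You also acknowledge that Baker--Yu bounds are too weak and that the cyclotomic-tower idea is only a hope; as written, there is no argument here.

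The paper's proof is completely different and does not attempt any direct $\mP$-adic analysis of the unit group. Instead it argues by contradiction: assuming a solution with $m_{\lambda,\mu}>2\ord_\mP(2)$, one uses the $\mathcal{S}_3$-action to arrange $\mu\in\OO_K^*$ with $\mu\equiv 1\pmod{4}$, so $K(\sqrt{\mu})/K$ is unramified at all finite places. Iwasawa's theorem that $\Q_{n,2}$ has odd narrow class number then forces $\mu=\delta^2$, and the factorization $\lambda=(1+\delta)(1-\delta)$ together with the identities $\lambda_1\pm\lambda_2\in\{2,2\delta\}$ is manipulated into a \emph{new} $S$-unit solution $(\lambda',\mu')$ with $m_{\lambda',\mu'}=2m_{\lambda,\mu}-2\ord_\mP(2)>m_{\lambda,\mu}$. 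Iterating produces infinitely many distinct $S$-unit solutions, contradicting Siegel's finiteness theorem. The two key inputs you are missing are thus the odd narrow class number of $\Q_{n,2}$ and this self-amplifying construction; neither Leopoldt nor linear forms in logarithms enters at all.
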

\begin{proof}[Proof Sketch]
Write $K=\Q_{n,2}$. Then $2$ is totally ramified
in $\OO_K$ and we let $\mP$ be the unique prime
above $2$. In the notation of Theorem~\ref{thm:FermatGen},
$S=T=\{\mP\}$. The key to the proof is to show that
every solution $(\lambda,\mu)$ to the $S$-unit equation
\eqref{eqn:sunit} satisfies condition (A) of Theorem~\ref{thm:FermatGen}.
Let $(\lambda,\mu)$ be a solution \eqref{eqn:sunit}.
Write
\[
m_{\lambda,\mu}:=
\max\{\lvert \ord_{\mP}(\lambda) \rvert, \lvert \ord_{\mP}(\mu) \rvert \};
\]
this is the quantity appearing in criterion (A) of Theorem~\ref{thm:FermatGen}.
%If $n_{\lambda,\mu}=0$ then
%$\lambda$, $\mu$ are both units
%and $\lambda \equiv \mu \equiv 1 \pmod{\mP}$ which contradicts
%$\lambda+\mu=1$. Hence $n_{\lambda,\mu}>0$.
%at least one of $\lambda$, $\mu$ has a
%non-zero valuation at $\mP$.
Suppose
\begin{equation}\label{eqn:assumption}
m_{\lambda,\mu}>2 \ord_{\mP}(2).
\end{equation}
The $\mathcal{S}_3$-action does not affect the
value of $m_{\lambda,\mu}$, and
by considering this action on
$(\lambda,\mu)$ we may suppose that
$\ord_\mP(\mu)=0$ and $\ord_\mP(\lambda)=m_{\lambda,\mu}$.
Then $\mu \in \OO_K^*$ and
$\mu=1-\lambda \equiv 1 \pmod{4}$ by assumption~\eqref{eqn:assumption}.
It follows from this that the extension $K(\sqrt{\mu})/K$
is unramified at $\mP$. Since $\mu$ is a unit, this extension
is unramified at all odd primes. Thus $K(\sqrt{\mu})/K$
is unramified at all the finite places. We now shall need
a theorem due to Iwasawa which asserts that
$K=\Q_{n,2}$ has odd narrow class number. Thus
$K(\sqrt{\mu})=K$ and so $\mu$ is a square.
We write $\mu=\delta^2$ where $\delta \in \OO_K^*$. Thus
\[
(1+\delta)(1-\delta)=1-\mu=\lambda.
\]
Hence
\[
\lambda=\lambda_1 \lambda_2, \qquad \lambda_1=1+\delta, \qquad \lambda_2=1-\delta.
\]
Now
\begin{equation}\label{eqn:mult}
\lambda_1+\lambda_2=2, \qquad \lambda_1-\lambda_2=2\delta.
\end{equation}
It follows easily that one of the $\ord_\mP(\lambda_i)$ is $m-\ord_\mP(2)$
and the other is $\ord_{\mP}(2)$,
where $m=m_{\lambda,\mu}=\ord_\mP(\lambda)$.
By swapping $\delta$ and $-\delta$ if necessary,
we may suppose $\ord_\mP(\lambda_1)=m-\ord_\mP(2)$ and
$\ord_\mP(\lambda_2)=\ord_{\mP}(2)$.
Multiplying the two equations in \eqref{eqn:mult}, dividing
by $\lambda_2^2$ and rearranging we obtain
\[
\lambda^\prime+\mu^\prime=1,
\qquad \lambda^\prime=\frac{\lambda_1^2}{\lambda_2^2},
\qquad \mu^\prime=\frac{-4 \delta}{\lambda_2^2}.
\]
Observe that $\lambda^\prime$, $\mu^\prime \in \OO_S^*$
so we obtain another solution to \eqref{eqn:sunit}.
Moreover,
\[
m_{\lambda^\prime,\mu^\prime}=2m_{\lambda,\mu}-2\ord_\mP(2)> m_{\lambda,\mu},
\]
where the last inequality follows from \eqref{eqn:assumption}.
This shows that the solution $(\lambda^\prime,\mu^\prime)$
is different from $(\lambda,\mu)$ and also satisfies
\eqref{eqn:assumption}. Repeating the argument allows us to construct
infinitely many solutions to the $S$-unit equation
contradicting Siegel's theorem (Theorem~\ref{thm:Siegel}).
Thus assumption~\ref{eqn:assumption} is false.
We deduce that every solution to \eqref{eqn:sunit}
satisfies $m_{\lambda,\mu} \le 2 \ord_\mP(2)$
and in particular satisfies condition (A) of Theorem~\ref{thm:FermatGen}.
This completes the proof.
\end{proof}

The following more recent theorem is from \cite{FKS2}.
\begin{thm}[Freitas, Kraus and Siksek]
Let $\ell \ge 5$ be an odd prime. Suppose $\ell$ is non-Wieferich
(i.e. $2^{\ell-1} \not \equiv 1 \pmod{\ell^2}$). Then the
asymptotic Fermat conjecture holds over $\Q_{n,\ell}$
for all $n \ge 1$.
\end{thm}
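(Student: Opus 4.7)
The plan is to apply Theorem~\ref{thm:FermatGen} to $K = \Q_{n,\ell}$. Since $[K:\Q] = \ell^n$ is odd (because $\ell$ is odd), hypothesis (ES) holds automatically. The first step is to translate the non-Wieferich hypothesis into the statement that $2$ is inert in $K$. Using the decomposition $\Gal(\Q(\zeta_{\ell^{n+1}})/\Q) \cong \mu_{\ell-1} \times (1+\ell\Z_\ell)/(1+\ell^{n+1}\Z_\ell)$ and the fact that $\Q_{n,\ell}$ is the fixed field of the torsion factor, the Frobenius of $2$ in $\Gal(\Q_{n,\ell}/\Q)$ corresponds to the image of $2/\omega(2)$, where $\omega$ is the Teichm\"{u}ller character. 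The non-Wieferich condition $2^{\ell-1} \not\equiv 1 \pmod{\ell^2}$ is equivalent to $2/\omega(2) \not\equiv 1 \pmod{\ell^2}$, i.e.\ to $2/\omega(2)$ being a topological generator of $1+\ell\Z_\ell$; hence its image has order $\ell^n$ and $2$ is inert in $K$. Let $\mP$ denote the unique prime above $2$; then $\ord_\mP(2) = 1$, $f(\mP/2) = \ell^n$, and in the notation of Theorem~\ref{thm:FermatGen} one has $S = U = \{\mP\}$ and $T = \emptyset$.

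The core of the proof is to show that every solution $(\lambda,\mu)$ to the $S$-unit equation $\lambda + \mu = 1$ satisfies condition~(B) of Theorem~\ref{thm:FermatGen}. I would bound $m_{\lambda,\mu} = \max\{|\ord_\mP(\lambda)|, |\ord_\mP(\mu)|\}$ by adapting the doubling argument used for $\Q_{n,2}$. After an $\mathcal{S}_3$-symmetry one may assume $\ord_\mP(\mu) = 0$ and $\ord_\mP(\lambda) = m$, so $\mu \equiv 1 \pmod{\mP^m}$. Provided $m \ge 2\ord_\mP(2) + 1 = 3$, the Kummer extension $K(\sqrt{\mu})/K$ is unramified at $\mP$ and, since $\mu$ is a unit, at every other finite prime; an analogue of Iwasawa's theorem (triviality of the $2$-part of the narrow class group of $\Q_{n,\ell}$) would then force $\mu$ to be a square, and the factorisation $\lambda = 1 - \mu = (1-\delta)(1+\delta)$ would produce a new $S$-unit solution of strictly larger $m$, contradicting Siegel's theorem (Theorem~\ref{thm:Siegel}).

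The main obstacles are twofold. The first is establishing the required narrow class-group fact for $\Q_{n,\ell}$: while Iwasawa's theorem gives odd class number for $\Q_{n,2}$, the corresponding triviality of the $2$-part for odd $\ell$ is considerably more delicate and is the natural place where the non-Wieferich hypothesis may play a further role. The second and more serious obstacle is that condition~(B) requires not only the bound $m_{\lambda,\mu} \le 4\ord_\mP(2) = 4$ but also the congruence $\ord_\mP(\lambda\mu) \equiv 1 \pmod{3}$, which automatically fails whenever $\ord_\mP(\lambda) = \ord_\mP(\mu) = 0$---in particular for genuine unit equation solutions. To finish one would either rule out such solutions in $\Q_{n,\ell}$ (perhaps via a Triantafillou-style $\ell$-adic argument exploiting the non-Wieferich condition) or refine the inertia analysis of the Frey curve at $\mP$ to sidestep these residual cases directly.
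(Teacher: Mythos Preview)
Your overall framework is sound: verify (ES) via the odd degree $\ell^n$, use the non-Wieferich hypothesis to show that $2$ is inert in $K=\Q_{n,\ell}$ (so $S=U=\{\mP\}$, $T=\emptyset$, $\ord_\mP(2)=1$), and then try to check condition~(B) of Theorem~\ref{thm:FermatGen} for every $S$-unit solution. You have also correctly isolated the two real obstructions.

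The paper is a survey and does not give the full argument, referring to \cite{FKS2} for details. What it \emph{does} supply is exactly the resolution of your ``second and more serious obstacle'': it proves that for $\ell\ge 5$ the unit equation $\lambda+\mu=1$ has \emph{no} solutions in $\OO_K^*$ whenever $K$ is an $\ell$-extension of $\Q$ in which $\ell$ is totally ramified (and $K=\Q_{n,\ell}$ is such a field). The proof is short and, contrary to your guess, makes no use of the non-Wieferich hypothesis. Let $\mathfrak{L}$ be the unique prime of $\OO_K$ above $\ell$; then $\OO_K/\mathfrak{L}=\F_\ell$, and since $\Gal(K/\Q)$ fixes $\mathfrak{L}$ and has $\ell$-power order, one checks that $\Norm(\lambda)\equiv \lambda\pmod{\mathfrak{L}}$ for every $\lambda\in\OO_K$. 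Thus any unit satisfies $\lambda\equiv\pm 1\pmod{\mathfrak{L}}$, and $\pm 1\pm 1\equiv 1$ in $\F_\ell$ is impossible for $\ell\ge 5$. So the non-Wieferich condition enters the theorem only where you already used it---to force $2$ to be inert---and not in eliminating unit solutions.

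Your first obstacle, the class-number input needed to run the doubling argument over $\Q_{n,\ell}$, is not treated in the survey at all; nor is the remaining verification of the congruence $\ord_\mP(\lambda\mu)\equiv 1\pmod{3}$ for the surviving $S$-unit solutions with $m_{\lambda,\mu}\ge 1$. (A small correction: with $\ord_\mP(2)=1$ your doubling step gives $m'=2m-4$, so strict growth requires $m\ge 5$, not $m\ge 3$; this still yields the bound $m\le 4$ you need, but the cases $m\in\{2,3\}$ must be handled separately.) For these points one has to go to \cite{FKS2}.
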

A key step towards the proof of this theorem is the following
theorem about unit equations, which applies to $K=\Q_{n,\ell}$
with $\ell \ge 5$.
\begin{thm}
Let $\ell \ge 5$ be an odd prime.
Let $K$ be an $\ell$-extension of $\Q$ (i.e. a finite
Galois extension of $\Q$ with degree $[K:\Q]=\ell^n$
for some $n\ge 1$). Suppose $\ell$ is totally
ramified in $\OO_K$. Then there is no solution to the unit equation in $K$.
%the equation $\lambda+\mu=1$ does not have any solutions   $\lambda$, $\mu \in \OO_{K}^*$.
\end{thm}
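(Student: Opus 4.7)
The plan is to adapt the Triantafillou-style argument reproduced in the excerpt, exploiting the total ramification of $\ell$ to force all Galois conjugates of a unit into a single residue class mod the unique prime above $\ell$. This then combines with the $\pm 1$ norm condition to pin down unit residues very tightly.

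First I would set up the local data. Let $\mathfrak{p}$ be the unique prime of $\OO_K$ above $\ell$, so $\mathfrak{p}^{\ell^n}=\ell\OO_K$, the residue field is $\F_\ell$, and $\mathfrak{p}\cap\Z=\ell\Z$. Crucially, because $\mathfrak{p}$ is the only prime of $\OO_K$ above $\ell$, every $\sigma\in\Gal(K/\Q)$ satisfies $\sigma(\mathfrak{p})=\mathfrak{p}$. Consequently, for any $\alpha\in\OO_K$ with residue $\bar\alpha=c\in\F_\ell$, every Galois conjugate reduces to the same $c$ modulo $\mathfrak{p}$.

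The key step is then to analyze the characteristic polynomial of a hypothetical unit. Suppose $\lambda,\mu\in\OO_K^\ast$ satisfy $\lambda+\mu=1$, and let $c_\lambda,c_\mu\in\{1,\ldots,\ell-1\}$ be the lifts of $\bar\lambda,\bar\mu\in\F_\ell^\ast$. Form the polynomial
\[
f_\lambda(x)=\prod_{\sigma\in\Gal(K/\Q)}(x-\sigma\lambda)\in\Z[x].
\]
By the previous paragraph, $f_\lambda(x)\equiv(x-c_\lambda)^{\ell^n}\pmod{\mathfrak{p}\cdot\OO_K[x]}$, and since $f_\lambda$ has integer coefficients and $\mathfrak{p}\cap\Z=\ell\Z$ this sharpens to $f_\lambda(x)\equiv(x-c_\lambda)^{\ell^n}\pmod{\ell}$ in $\Z[x]$. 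Evaluating at $x=0$ and using $f_\lambda(0)=(-1)^{\ell^n}\Norm_{K/\Q}(\lambda)=\pm1$ yields
\[
\pm1\equiv(-c_\lambda)^{\ell^n}\pmod{\ell}.
\]
Because $\ell\equiv1\pmod{\ell-1}$, we have $\ell^n\equiv1\pmod{\ell-1}$, so Fermat gives $(-c_\lambda)^{\ell^n}\equiv -c_\lambda\pmod{\ell}$. Hence $c_\lambda\equiv\pm1\pmod\ell$, and by symmetry $c_\mu\equiv\pm1\pmod\ell$ as well.

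The contradiction is now immediate: reducing $\lambda+\mu=1$ modulo $\mathfrak{p}$ gives $c_\lambda+c_\mu\equiv1\pmod\ell$, but a sum of two elements of $\{\pm1\}\subset\F_\ell$ lies in $\{-2,0,2\}$. Equating any of these to $1$ modulo $\ell$ forces $\ell\mid 1$ or $\ell\mid 3$, contradicting $\ell\ge 5$. There is no serious obstacle; the only point requiring care is the descent from a congruence in $\OO_K[x]$ to one in $\Z[x]$, which works precisely because $\ell$ is totally ramified (so $\mathfrak{p}\cap\Z=\ell\Z$), and the verification that $\ell^n\equiv1\pmod{\ell-1}$ so that Fermat applies cleanly to $(-c_\lambda)^{\ell^n}$.
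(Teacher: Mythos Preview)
Your proof is correct and is essentially the same as the paper's argument. Both exploit total ramification to show that all Galois conjugates of $\lambda$ share a common residue $c_\lambda$ modulo the unique prime above $\ell$, then use $\Norm(\lambda)=\pm1$ together with $a^{\ell^n}\equiv a\pmod\ell$ to force $c_\lambda\equiv\pm1$, and finally derive the contradiction $\pm1\pm1\equiv1$ in $\F_\ell$ for $\ell\ge5$. Your packaging via the characteristic polynomial $f_\lambda$ is purely cosmetic---you only use its constant term, which is (up to sign) the norm, and the paper computes the norm directly as $\prod_\sigma\lambda^\sigma\equiv c_\lambda^{\ell^n}\pmod{\mathfrak{L}}$; otherwise the arguments are identical. (One small note: you call this ``Triantafillou-style,'' but Triantafillou's argument for $3$ splitting completely is genuinely different---it works modulo $9$ and uses the trace rather than the norm.)
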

\begin{proof}
Let $G=\Gal(K/\Q)$. Let $\mathfrak{L}$ be the unique prime
ideal of $\OO_K$ above $\ell$. As $\ell$ is totally ramified
in $\OO_K$, we know that $\mathfrak{L}^\sigma=\mathfrak{L}$
for all $\sigma \in G$. Moreover, the residue field
$\OO_K/\mathfrak{L}$ is simply $\F_\ell$. In particular,
for any $\lambda \in \OO_K$ then there is some $a \in \Z$
such that $\lambda \equiv a \pmod{\mathfrak{L}}$.
Applying $\sigma \in G$ to this congruence
we see that $\lambda^\sigma \equiv a \pmod{\mathfrak{L}}$.
Let $\Norm$ denote the norm for the extension $K/\Q$.
Then
\[
\Norm(\lambda)=\prod_{\sigma \in G} \lambda^{\sigma}
\equiv a^{\# G} \pmod{\mathfrak{L}}.
\]
Since $\OO_K/\mathfrak{L}=\F_\ell$ and since $\#G=\ell^n$,
Fermat's Little Theorem gives $a^{\#G} \equiv a \equiv \lambda
\pmod{\mathfrak{L}}$.
We deduce that $\Norm(\lambda) \equiv \lambda \pmod{\mathfrak{L}}$
for all $\lambda \in \OO_K$.

Now let $\lambda$, $\mu \in \OO_K^*$ and suppose $\lambda+\mu=1$.
By the above $\lambda \equiv \pm 1 \pmod{\mathfrak{L}}$
and $\mu \equiv \pm 1 \pmod{\mathfrak{L}}$.
Hence $\pm 1 \pm 1 \equiv 1$ in $\OO_K/\mathfrak{L}=\F_\ell$.
This is impossible as $\ell \ge 5$.
\end{proof}

\section{Generalizations}
Let $K$ be a number field (we drop the assumption that
$K$ is totally real).
Let $A$, $B$, $C$ be non-zero elements of $\OO_K$. We consider
the following generalized Fermat equation
\begin{equation}\label{eqn:genferm}
A x^p+B y^p+ C z^p=0,
\end{equation}
and we are interested in solutions $(x,y,z) \in K^3$. We say that
such a solution is \textbf{trivial} if $xyz=0$ otherwise
we say it is \textbf{non-trivial}.
We propose the following generalization of the
asymptotic Fermat conjecture.
%inspired by the works of Deconinck \cite{H} and of
%Kara and Ozman \cite{KO}.
\begin{conj}[A Generalized Asymptotic Fermat Conjecture]
Let $K$ be a number field, and $A$, $B$, $C$ be non-zero
elements of $\OO_K$. Let $\Omega$ be the subgroup of roots
of unity inside $\OO_K^*$. Suppose
\[
A \omega_1+B \omega_2+C \omega_3 \ne 0,
\]
for every $\omega_1$, $\omega_2$, $\omega_3 \in \Omega$.
Then there exists a constant $\mathcal{B}(K,A,B,C)$
such that
for all primes $p>\mathcal{B}(K,A,B,C)$ the only solutions to the Fermat
equation \eqref{eqn:genferm} with $(x,y,z) \in K^3$ are the trivial solutions.
\end{conj}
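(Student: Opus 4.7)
The plan is to extend the modular/$S$-unit strategy of Theorem~\ref{thm:FermatGen} and its version for general number fields in \cite{Haluk} to the weighted Fermat equation~\eqref{eqn:genferm}. Given a putative non-trivial solution $(x,y,z) \in K^3$, I would first normalize it by scaling and by class group representatives so that the supports of $(x)$, $(y)$, $(z)$ lie in a fixed finite set of primes of $\OO_K$ depending only on $K$ and on $A,B,C$, and then attach the Frey curve
\[
E \; :\; Y^2 \;=\; X(X - Ax^p)(X + By^p).
\]
Its $j$-invariant and minimal discriminant are controlled by $ABC \cdot xyz$, so $E$ has good or multiplicative reduction outside a finite set $S$ of primes of $\OO_K$ depending only on $K$, $A$, $B$, $C$, and not on the solution.

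Next I would run the usual chain from Section~\ref{sec:gen}: irreducibility of $\overline{\rho}_{E,p}$ for large $p$ (via \cite{irred}); modularity of $\overline{\rho}_{E,p}$, granted Serre's conjecture over $K$; and level lowering, producing a newform $\ff$ of parallel weight $2$ and level $\cN$ supported on $S$. Irrational $\ff$ are eliminated by the Serre--Mazur argument of Section~\ref{sec:SM}, which bounds $p$ in terms of the finitely many possible $\ff$. For rational $\ff$, assuming the Eichler--Shimura conjecture over $K$ (cf.\ Conjecture~\ref{conj:ES} and its extension in \cite{Haluk}), $\ff$ corresponds to an elliptic curve $F/K$ of conductor $\cN$; an inertia analysis at primes of $S$ following Kraus \cite{Kraus} and Bennett--Skinner \cite{BennettSkinner} shows that, possibly after a controlled quadratic twist, $F$ acquires full $2$-torsion and has potentially good reduction outside $S$. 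Lemma~\ref{lem:correspondence} then places such $F$ in bijection with $\mathcal{S}_3$-orbits of solutions $(\lambda,\mu)$ of the $S$-unit equation over $K$, of which there are only finitely many by Siegel--Parry (Theorem~\ref{thm:Siegel}). For each of the finitely many candidate curves $F$ one compares $\overline{\rho}_{E,p}(I_\mathfrak{q})$ with $\overline{\rho}_{F,p}(I_\mathfrak{q})$ at primes $\mathfrak{q} \in S$; any mismatch gives a congruence that bounds $p$. The only candidates for which no such mismatch can be produced are those whose $\lambda$-invariant has the form $-B\omega_2/(A\omega_1)$ for $\omega_1,\omega_2,\omega_3 \in \Omega$ satisfying $A\omega_1 + B\omega_2 + C\omega_3 = 0$: these correspond, for every $p$ coprime to $\#\Omega$, to the non-trivial solutions $(x,y,z) = (\nu_1,\nu_2,\nu_3) \in \Omega^3$ characterized by $\nu_i^p = \omega_i$, and so cannot be ruled out by a finite bound on $p$. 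The hypothesis of the conjecture excludes exactly these, leaving a uniform bound $\mathcal{B}(K,A,B,C)$.

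The main obstacle is conjectural input, as already flagged in Section~\ref{sec:gen}: Serre's modularity conjecture and the Eichler--Shimura conjecture over $K$ are both required and neither is currently known when $K$ has complex embeddings. The chief genuine difficulty beyond what is done in \cite{FS, Haluk} is the inertia analysis at primes $\mathfrak{q} \mid ABC$, where the image of $\overline{\rho}_{E,p}(I_\mathfrak{q})$ depends on $\ord_\mathfrak{q}(A)$, $\ord_\mathfrak{q}(B)$, $\ord_\mathfrak{q}(C)$ and is more intricate than in the $A=B=C=1$ setting; carrying this out uniformly (with the bound allowed to depend on $A,B,C$) requires enlarging $S$ and carefully tracking how the $\mathcal{S}_3$-action on $(\lambda,\mu)$ interlocks with permutations of the coefficient triple $(A,B,C)$ and the variable triple $(x,y,z)$. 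Once this bookkeeping is handled, identifying the roots-of-unity obstruction as the unique source of exceptional $\lambda$-invariants is essentially formal: such a $\lambda$ satisfies a non-vanishing $S$-unit identity $A\lambda + B(1-\lambda) + C = 0$ up to the $\mathcal{S}_3$-action, and the hypothesis is precisely what prevents this from having a solution in roots of unity.
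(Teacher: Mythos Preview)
The statement in question is a \emph{conjecture}, and the paper does not prove it. The paper's only remark about its status is that it ``is a straightforward consequence of a suitable version of the $ABC$-conjecture of number fields''. So there is no proof in the paper to compare your proposal against; what you have written is a proposed attack on an open problem.

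That said, your outline has a genuine gap, and it is precisely the gap that makes the statement a conjecture rather than a theorem even under the modularity hypotheses you assume. The unjustified step is the sentence ``The only candidates for which no such mismatch can be produced are those whose $\lambda$-invariant has the form $-B\omega_2/(A\omega_1)$ for $\omega_1,\omega_2,\omega_3 \in \Omega$ satisfying $A\omega_1+B\omega_2+C\omega_3=0$.'' This is false in general. The inertia comparison at primes $\mathfrak{q}\in S$ does \emph{not} automatically eliminate every elliptic curve $F$ that fails to arise from a roots-of-unity solution. That is exactly why Theorem~\ref{thm:FermatGen} and Theorem~\ref{thm:KO} carry explicit hypotheses (conditions (A)/(B), or the bound $\max\{|\ord_\mP\lambda|,|\ord_\mP\mu|\}\le 4\ord_\mP(2)$) on the $S$-unit solutions: these conditions are what guarantees an inertia mismatch, and they must be \emph{checked}, not assumed. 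The Serre--Mazur example in Section~\ref{sec:SM} already illustrates the failure mode: when $L\ge 31$ is a Fermat or Mersenne prime, there is an elliptic curve $F/\Q$ of conductor $2L$ with full $2$-torsion that survives every inertia test, and it has nothing to do with roots of unity. Over number fields the situation is worse, not better; this is why Theorem~\ref{thm:density} only reaches density $5/6$ (or $1$ under Eichler--Shimura) rather than applying to every real quadratic field outright.

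In short: your chain (Frey curve $\to$ irreducibility $\to$ modularity $\to$ level lowering $\to$ Eichler--Shimura $\to$ $S$-unit equation) is the correct architecture and matches what the paper describes, but the final elimination step is not a formality. Identifying the roots-of-unity obstruction as the \emph{unique} source of non-eliminable $F$ is exactly the content of the conjecture, not a consequence of the method.
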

We point out that this conjecture is a straightforward
consequence of a suitable version
of the $ABC$-conjecture of number fields, such as the one
in \cite{Browkin}.

Equation \eqref{eqn:genferm} with $K=\Q$
was first systematically studied
using the approach via Galois representations and modular
forms by Kraus \cite{Krausppp} and by Halberstadt and Kraus \cite{HK}.
In particular, Halberstadt and Kraus proved the following
remarkable theorem.
\begin{thm}[Halberstadt and Kraus]
Let $A$, $B$, $C$ be odd rational integers. Then for a positive
proportion of primes $p$, the equation \eqref{eqn:genferm}
has no non-trivial solutions $(x,y,z) \in \Z^3$.
\end{thm}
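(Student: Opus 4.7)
The plan is to replay the modular argument of Section~\ref{sec:SM} in this more general setting, combined with a local symplectic compatibility criterion to carve out a positive density of \lq\lq working\rq\rq{} primes $p$. Given a hypothetical non-trivial primitive integral solution $(x,y,z)$ of \eqref{eqn:genferm}, since $A,B,C$ are all odd, exactly one of the three terms $Ax^p, By^p, Cz^p$ is even; after permuting variables and adjusting signs I set $A' := Ax^p \equiv -1 \pmod{4}$, $B' := By^p$ even, and form the Frey curve
\[
E' \;:\; Y^2 \;=\; X(X-A')(X+B').
\]
For $p$ sufficiently large, Mazur's isogeny theorem guarantees irreducibility of $\bar{\rho}_{E',p}$, modularity of $E'$ follows from Wiles and its extensions, and Ribet level-lowering produces a weight $2$ cuspidal newform $f$ with trivial character of some level $N_{A,B,C}$ dividing $2^s \cdot \mathrm{rad}(ABC)$, for a small $s$ determined by Tate's algorithm applied to $E'/\Q_2$ using only the residues of $A,B,C$ modulo a fixed power of $2$.

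I would then enumerate the finitely many weight $2$ newforms at every possible level dividing $N_{A,B,C}$. Each irrational newform $f$ contributes only finitely many exceptional $p$ by the Serre--Mazur congruence $p \mid \mathrm{Norm}_{K_f/\Q}(\beta_\ell)$ for any prime $\ell \nmid 2ABC$ with $c_\ell(f) \notin \Q$. Each rational newform corresponds, by Eichler--Shimura, to an isogeny class of elliptic curves $E/\Q$ of conductor dividing $N_{A,B,C}$; since $E'$ has full $2$-torsion, $a_\ell(E') \in T_\ell$ for every prime $\ell$ of good reduction, so any $E$ with some $a_\ell(E) \notin T_\ell$ can be dispatched via $p \mid \gamma_{E,\ell}$. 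This leaves a finite explicit list $\mathcal{E}$ of elliptic curves $E/\Q$ which, up to isogeny, have full $2$-torsion and conductor dividing $N_{A,B,C}$.

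The core of the proof is then, for each $E \in \mathcal{E}$, to impose a congruence condition on $p$ that cuts out a positive proportion of primes for which $\bar{\rho}_{E',p} \not\cong \bar{\rho}_{E,p}$. For this I would invoke a symplectic compatibility argument at a prime $q \mid ABC$ where both $E$ and $E'$ have multiplicative reduction: the Tate parameters $q_E, q_{E'} \in \Q_q^*$ must satisfy a Legendre-symbol relation of the form $(u_{E,E',q}/p) = \varepsilon_{E,E',q}$, with explicit $u_{E,E',q} \in \Z \setminus \{0\}$ and $\varepsilon_{E,E',q} \in \{\pm 1\}$ depending only on the reduction data. Each such constraint is, by quadratic reciprocity, a congruence on $p$ modulo $4|u_{E,E',q}|$; intersecting the \emph{negations} of these constraints over the finite list $\mathcal{E}$, and discarding the finitely many $p$ bounded by the irrational-newform and wrong-torsion steps, yields a set of admissible $p$ of positive Dirichlet density, as desired. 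The main obstacle is verifying uniformly over $\mathcal{E}$ that the symplectic data at some accessible $q \mid ABC$ yields a nontrivial (non-tautological) Legendre condition on $p$; this boils down to a finite case analysis on the Kodaira types of $E'$ versus those of each candidate $E$ at primes $q \mid ABC$, using that the odd hypothesis on $A,B,C$ forces a common structure of multiplicative reduction at every such $q$.
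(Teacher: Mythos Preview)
The paper does not give a proof of this theorem; it simply cites Halberstadt--Kraus \cite{HK} and moves on. So there is no proof in the paper to compare against, and I will instead assess your outline against the actual Halberstadt--Kraus argument.

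Your overall architecture---Frey curve, irreducibility, modularity, level lowering to a finite list of rational newforms and hence elliptic curves $E$, then a symplectic criterion to eliminate each surviving $E$ for a positive density of primes $p$---is exactly the shape of their proof. The gap lies in where and how you run the symplectic step.

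You propose to apply the symplectic criterion at a prime $q\mid ABC$, claiming that the oddness of $A,B,C$ ``forces a common structure of multiplicative reduction at every such $q$''. This is not correct: at a prime $q\mid ABC$ the reduction type of $E'$ depends on the individual valuations $\ord_q(A),\ord_q(B),\ord_q(C)$ and may well be additive; the same is true of the level-lowered curve $E$. The hypothesis that $A,B,C$ are \emph{odd} is there for a different reason: it guarantees that both $E'$ and $E$ have multiplicative reduction at the prime $2$, and it is at $2$ that Halberstadt--Kraus carry out the symplectic comparison. There one has $\ord_2(\Delta_{E',\min})$ congruent to a fixed nonzero integer modulo $p$ (independent of the particular solution), which is what makes the resulting Legendre-symbol condition on $p$ genuinely nontrivial. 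By locating the argument at $q\mid ABC$ you have misplaced the entire reason for the oddness hypothesis.

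There is a second, more structural issue. The symplectic criterion at a single prime $q$ of common multiplicative reduction does not by itself yield a condition of the form $\bigl(\tfrac{u}{p}\bigr)=\varepsilon$ with $\varepsilon$ determined by local reduction data. What it says is: the isomorphism $E'[p]\cong E[p]$ is symplectic if and only if $\ord_q(\Delta_{E'})\ord_q(\Delta_E)$ is a square modulo $p$. The symplectic/anti-symplectic type is a \emph{global} invariant of the isomorphism, not something you can read off at $q$ alone. To extract a contradiction you must either compute the type at a second prime of common multiplicative reduction and show the two answers disagree for a positive proportion of $p$, or know independently (e.g.\ from the construction via level lowering) that the isomorphism is symplectic. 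Halberstadt--Kraus do the former via a case analysis. Your write-up treats $\varepsilon_{E,E',q}$ as locally determined and then ``negates'' a single constraint per $E$; as stated this does not produce a contradiction.
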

More recently, Dieulefait and Soto \cite{DieulefaitSoto}
have proved a number of theorems
concerning the generalized asymptotic Fermat conjecture, again
with $K=\Q$.
\begin{thm}[Dieulefait and Soto]\label{thm:DS}
Let $A$, $B$, $C$ be rational integers divisible only
by primes $\equiv 1 \pmod{12}$. Then
there is a constant $\cB(A,B,C)$ such that
if $p > \cB(A,B,C)$
then every solution $(x,y,z) \in \Z^3$ to \eqref{eqn:genferm}
is trivial.
\end{thm}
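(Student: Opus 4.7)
The plan is to extend the Serre--Mazur strategy of Section~\ref{sec:SM} to the setting where $ABC$ has several prime divisors, exploiting the fact that the congruence $\ell \equiv 1 \pmod{12}$ forbids $\ell$ from being a Mersenne or a Fermat prime.

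\emph{Step 1: Frey curve and level lowering.} Suppose $(x,y,z) \in \Z^3$ is a hypothetical non-trivial primitive solution to $Ax^p+By^p+Cz^p=0$. After rearranging the three terms and rescaling so that $Ax^p \equiv -1 \pmod{4}$ and $2 \mid By^p$, attach the Frey curve $E^\prime : Y^2 = X(X-Ax^p)(X+By^p)$. For $p$ larger than an absolute constant, Mazur's isogeny theorem gives that $\overline{\rho}_{E^\prime,p}$ is irreducible; modularity of $E^\prime/\Q$ combined with Ribet's level-lowering theorem then shows that $\overline{\rho}_{E^\prime,p}$ arises from a cuspidal weight-$2$ newform $f$ of level $N$ dividing $2 \cdot \mathrm{rad}(ABC)$, with $N$ depending only on $A,B,C$ and not on the putative solution.

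\emph{Step 2: Rational newforms and the associated elliptic curve.} There are only finitely many newforms of weight $2$ and level dividing $2 \cdot \mathrm{rad}(ABC)$. For those with irrational Hecke field, the Serre--Mazur argument yields a bound $p \le C(A,B,C)$ via nonvanishing of $\Norm_{K_f/\Q}(\beta_\ell)$ at a suitable auxiliary prime $\ell \nmid 2 \cdot \mathrm{rad}(ABC)$. So we may assume $f$ is rational. Eichler--Shimura associates to $f$ an elliptic curve $E/\Q$ of conductor $N$ with $\overline{\rho}_{E,p} \sim \overline{\rho}_{E^\prime,p}$; since $E^\prime$ has full $2$-torsion, the congruence forces $4 \mid \#E(\F_\ell)$ for every prime $\ell \nmid N$, and by \cite[Lemma~7.5]{Haluk} we may replace $E$ by an isogenous curve with full $2$-torsion.

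\emph{Step 3: The $S$-unit equation and the Mersenne/Fermat dichotomy.} Mimicking the proof of Lemma~\ref{lem:FM}, write $E : Y^2 = X(X-a)(X+b)$ in a model minimal away from $2$; then $a^2 b^2 (a+b)^2 = \pm 2^u L_1^{v_1} \cdots L_k^{v_k}$ where $L_1,\dots,L_k$ are the distinct odd prime divisors of $ABC$. Hence $a$, $b$ and $a+b$ are each $\pm$ a monomial in $\{2, L_1,\dots,L_k\}$, giving an $S$-unit equation with $S = \{2, L_1,\dots,L_k\}$. The conductor of $E$ being squarefree outside $2$ rules out solutions in which some $L_i$ appears to the same exponent in all three of $a$, $b$, $a+b$. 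A careful case analysis of the surviving solutions should force, as in Lemma~\ref{lem:FM}, at least one $L_i$ to satisfy a Mersenne- or Fermat-type relation $L_i = 2^s \pm 1$. Since Mersenne primes are $\equiv 3$ or $7 \pmod{12}$ and Fermat primes are $\equiv 3$ or $5 \pmod{12}$, no prime $\equiv 1 \pmod{12}$ is of either type, contradicting the hypothesis on $A, B, C$.

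\emph{Main obstacle.} The chief difficulty lies in Step~3: when $k$ is large the $S$-unit equation has many solutions, and the analysis does not reduce cleanly to the single-prime dichotomy of Lemma~\ref{lem:FM}. A structural, rather than enumerative, argument is needed to show that the conductor constraints together with $L_i \equiv 1 \pmod{12}$ for every $i$ are jointly incompatible. A multi-Frey refinement, for instance using a second Frey curve defined over $\Q(\zeta_{12})$ (where each $L_i$ splits completely), may be needed to enforce enough congruences to discard the remaining $S$-unit solutions uniformly in $(A,B,C)$.
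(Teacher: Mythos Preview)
Your Step~3 is where the argument breaks, and you have correctly flagged it. The difficulty is not merely combinatorial: once $k\ge 2$ there \emph{will} in general be elliptic curves over $\Q$ with full $2$-torsion and squarefree conductor dividing $2L_1\cdots L_k$, so the Mersenne/Fermat dichotomy of Lemma~\ref{lem:FM} does not extend. (For instance any two primes $L_1,L_2$ with $L_1-L_2=2^s$ produce such a curve, and the congruence $L_i\equiv 1\pmod{12}$ does not forbid this.) No case analysis of the $S$-unit equation will therefore yield a contradiction at this stage, and the multi-Frey suggestion is unnecessary.

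The paper does not prove Theorem~\ref{thm:DS} directly; it deduces a slightly stronger statement (the Corollary after Theorem~\ref{thm:KO}, with $\ell=3$) from the Kara--Ozman criterion. The key conceptual shift you are missing is that one need \emph{not} eliminate the elliptic curves $E$: it suffices to show that every solution $(\lambda,\mu)$ of the $S$-unit equation with $S=\{2,q_1,\dots,q_r\}$ satisfies $\lvert\ord_2(\lambda)\rvert\le 4$ and $\lvert\ord_2(\mu)\rvert\le 4$, after which an image-of-inertia comparison at $2$ (built into Theorem~\ref{thm:KO}) rules out $\overline{\rho}_{E',p}\sim\overline{\rho}_{E,p}$ for large $p$. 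This valuation bound is exactly what the hypothesis $q_i\equiv 1\pmod{12}$ delivers: writing $\lambda=u/w$, $\mu=v/w$ with $u+v=w$ coprime $S$-monomials and (say) $2\nmid uv$, one has $u,v\equiv\pm1\pmod{12}$, hence $w\equiv 0,\pm2\pmod{12}$; but $3\nmid w$, so $w\equiv\pm2\pmod{4}$ and $\ord_2(w)=1$. Thus the role played by ``not Mersenne or Fermat'' in the single-prime case is replaced here by a uniform $2$-adic smallness of \emph{all} $S$-unit solutions, which is both weaker and all that is required.
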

Dieulefait and Soto prove their theorems by reducing to $S$-unit
equations using the same strategy as explained in Section~\ref{sec:SM}.

\bigskip

%Theorem~\ref{thm:FermatGen} has been vastly extended in a number of ways.
Recently a theorem
relating the Fermat equation with coefficients $A x^p+By^p+Cz^p=0$
over totally real fields to $S$-unit equations was proved
by Deconinck \cite{H}. The most general result
is due to Kara and Ozman \cite{KO} which we now describe.
Let $K$ be a number field. We assume
two standard conjectures from the Langlands programme, which
we describe briefly without stating them precisely. For a precise
statement of these conjectures see \cite{KO} or \cite{Haluk}.
\begin{enumerate}
\item[(I)] Serre's modularity conjecture over $K$. This
associates to a totally odd, continuous, finite flat,
absolutely irreducible $2$ dimensional mod $p$
representation  of $\Gal(\overline{K}/K)$
a cuspform of parallel weight $2$ whose
level is equal to the prime-to-$p$ part
of the Artin conductor of the representation.
\item[(II)] An \lq\lq Eichler--Shimura conjecture\rq\rq over $K$.
This associates to a weight $2$ cuspform with rational Hecke eigenvalues
either an elliptic curve or a \lq\lq fake elliptic curve\rq\rq. Note that Conjecture \ref{conj:ES} is a special case of this.
\end{enumerate}
We return to considering \eqref{eqn:genferm}
over a general number field $K$. Let
\[
\mathcal{R}=\prod_{\mathfrak{q} \mid ABC} \mathfrak{q}
\]
where the product is taken over the prime ideals $\mathfrak{q}$ dividing $ABC$.
This is called the \textbf{radical} of $ABC$. Let
\[
S=\{\mP \; : \; \text{$\mP \mid 2 \mathcal{R}$ is a prime ideal of $\OO_K$}\}.
\]
Let
\[
T=\{\mP \; : \; \text{$\mP \mid 2$ is a prime ideal of $\OO_K$},\;
f(\mP/2)=1\}.
\]
%and
%\[
%U=\{ \mP \; : \; $\mP \in T, \; f(\mP/2)=1\}.
%\]
The following is the main theorem of \cite{KO}.
\begin{thm}[Kara and Ozman]\label{thm:KO}
Let $K$ be a number field satisfying conjectures
(I) and (II). Let $A$, $B$, $C$ be odd elements of $\OO_K$
(i.e. $ABC$ is not divisible by any prime ideal $\mP \mid 2$).
Let $S$, $T$ be as above. Suppose that for every
solution $(\lambda,\mu)$ to the $S$-unit equation
\eqref{eqn:sunit} there is a prime $\mP \in T$
such that
\[
\max\{ \lvert \ord_{\mP}(\lambda) \rvert, \; \lvert
\ord_\mP(\mu) \rvert \} \le 4 \ord_\mP(2).
\]
Then the Generalized Asymptotic Fermat's Conjecture holds
for \eqref{eqn:genferm}; in other words there is
a constant $\mathcal{B}(K,A,B,C)$ such that
if $p>\mathcal{B}(K,A,B,C)$ is prime then
the only solutions to \eqref{eqn:genferm}
are the trivial ones.
\end{thm}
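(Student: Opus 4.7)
The proof plan is to follow the modular approach sketched in Sections~\ref{sec:SM} and~\ref{sec:gen}, adapted to a general number field using Conjectures~(I) and~(II), and to conclude by translating the resulting elliptic curve into a solution of the $S$-unit equation via Lemma~\ref{lem:correspondence}. Starting from a hypothetical non-trivial solution $(x,y,z)\in K^3$ to \eqref{eqn:genferm} with $p$ large, I would first normalize $(x,y,z)$, absorbing class-group obstructions into $S$ at the cost of enlarging it by finitely many primes, and then attach a Frey elliptic curve $E/K$ of the form $Y^2 = X(X-Ax^p)(X+By^p)$ after a suitable permutation of the variables. The curve $E$ has full $2$-torsion over $K$, multiplicative reduction at every prime $\mathfrak{q}\notin S$ of residue characteristic $\neq p$, and conductor divisible only by primes in $S$ together with primes above $p$.

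Next I would carry out the standard three steps. (i) Irreducibility of $\overline{\rho}_{E,p}$ for $p$ larger than a bound depending only on $K$, via the results of \cite{irred}. (ii) Serre's modularity (Conjecture~(I)) together with level lowering at primes away from $2$ to realize $\overline{\rho}_{E,p}$ as arising from a cuspform of parallel weight $2$ and level a fixed ideal $\cN$ depending only on $K,A,B,C$. (iii) If the attached newform is irrational, bound $p$ via the Serre--Mazur argument of Section~\ref{sec:SM}; otherwise invoke Conjecture~(II) to obtain a genuine elliptic curve $F/K$ of conductor $\cN$ with $\overline{\rho}_{F,p}\sim\overline{\rho}_{E,p}$, the fake elliptic curve case being ruled out for $p$ large by comparing the two residual representations at a prime of multiplicative reduction of $E$, as in \cite{Haluk}. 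Standard arguments involving the $p$-torsion transfer the full $2$-torsion from $E$ to $F$, so that by Lemma~\ref{lem:correspondence} the curve $F$ corresponds to a solution $(\lambda,\mu)$ of the $S$-unit equation \eqref{eqn:sunit}.

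The hypothesis then provides a prime $\mP\in T$ with
\[
\max\{\lvert\ord_{\mP}(\lambda)\rvert,\,\lvert\ord_{\mP}(\mu)\rvert\}\le 4\ord_{\mP}(2).
\]
Because $A,B,C$ are odd, $E$ has multiplicative reduction at $\mP$ with $p\mid\ord_{\mP}(\Delta_E)$, so for $p$ large the restriction $\overline{\rho}_{E,p}\vert_{I_{\mP}}$ is a tame extension with a prescribed non-trivial unipotent part. On the other hand, the above bound on $\ord_{\mP}(\lambda),\ord_{\mP}(\mu)$ bounds $\ord_{\mP}(j(F))$ by an explicit constant independent of $p$, which forces $\overline{\rho}_{F,p}\vert_{I_{\mP}}$ to have a shape incompatible with that of $\overline{\rho}_{E,p}\vert_{I_{\mP}}$ once $p$ exceeds a bound depending only on $K,A,B,C$. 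This is the Kraus-style local analysis at the degree-one prime $\mP$ above $2$ that underpins the proof of Theorem~\ref{thm:FermatGen}, with the factor $4\ord_{\mP}(2)$ calibrated so that the two images of inertia genuinely disagree.

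I expect the main obstacles to lie in the level-lowering step, ensuring that $\cN$ truly does not depend on the solution (in particular at primes above $2$, where the local behaviour of the Frey model is sensitive to $(\lambda,\mu)$ and where no odd-part cancellation is available), and in the use of~(II) to rule out fake elliptic curves and to match the $2$-torsion of $F$ with that of $E$; once these are in place, the final inertia comparison at $\mP\in T$ reduces to the same local computation as in the totally real case of Theorem~\ref{thm:FermatGen}.
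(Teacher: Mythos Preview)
The paper does not actually prove Theorem~\ref{thm:KO}; it merely states it as the main result of \cite{KO} and then illustrates it via a corollary. There is therefore no proof in the paper to compare your proposal against. That said, your sketch follows precisely the strategy the paper lays out in Sections~\ref{sec:SM} and~\ref{sec:gen} and in the proof sketch of the analogous totally real result, Theorem~\ref{thm:FermatGen}, and this is indeed the approach taken in \cite{KO} and \cite{Haluk}.

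Two small points worth tightening. First, before invoking Lemma~\ref{lem:correspondence} you must check that $F$ has potentially good reduction outside $S$; this is a separate image-of-inertia argument (as in the proof sketch of Theorem~\ref{thm:FermatGen}) and is not automatic from $F$ having conductor $\cN$. Second, your elimination of fake elliptic curves relies on $E$ having potentially multiplicative reduction at some prime above $2$; this is exactly what the existence of a prime $\mP\in T$ (forced by the hypothesis, since $(2,-1)$ is always a solution of the $S$-unit equation) guarantees, and the paper's Remark following Theorem~\ref{thm:KOdensity} explains why this step fails when $T=\emptyset$. Your identification of the level-lowering step and the fake-elliptic-curve elimination as the delicate points is accurate.
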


We illustrate the theorem of Kara and Ozman by
deriving a slightly stronger version of Theorem~\ref{thm:DS}.
\begin{cor}
Let $\ell$ be an odd prime.
Let $A$, $B$, $C$ be rational integers divisible only
by primes $\equiv \pm 1 \pmod{4\ell}$. Then
there is a constant $\cB(A,B,C)$ such that
if $p > \cB(A,B,C)$
then every solution $(x,y,z) \in \Z^3$ to \eqref{eqn:genferm}
is trivial.
\end{cor}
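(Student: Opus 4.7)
The plan is to apply Theorem~\ref{thm:KO} with $K = \Q$. Conjectures (I) and (II) hold over $\Q$ as theorems (Khare--Wintenberger and classical Eichler--Shimura), and every prime dividing $ABC$ is $\equiv \pm 1 \pmod{4\ell}$ and thus odd, so the coefficient hypothesis is met. With $\OO_K = \Z$, $T = \{(2)\}$, $\ord_{(2)}(2) = 1$, and $S = \{(2)\} \cup \{(q) : q \mid ABC\}$, the conclusion reduces to the $S$-unit statement: for every $(\lambda,\mu) \in (\Z_S^*)^2$ with $\lambda + \mu = 1$, $\max(|\ord_2 \lambda|, |\ord_2 \mu|) \le 4$.

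The driving observation is that, since $q \equiv \pm 1 \pmod{4\ell}$ implies $q^n \equiv \pm 1 \pmod{4\ell}$ for every $n \in \Z$, every odd $S$-unit is congruent to $\pm 1 \pmod{4\ell}$. The requirement $\ord_2(\lambda+\mu) = 0$ leaves only two shapes for the pair $(\ord_2 \lambda, \ord_2 \mu)$: (a) one entry is $0$ and the other is $\ge 1$; or (b) both entries equal some $c < 0$.

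In case (a), suppose $\ord_2 \lambda = 0$, so $\lambda \equiv \pm 1 \pmod{4\ell}$. The sign $+1$ is impossible, because $\lambda \equiv 1 \pmod{4\ell}$ would force $\ell \mid \mu$, whereas $\ell \notin S$ (the integer $\ell$ itself is not $\equiv \pm 1 \pmod{4\ell}$). Therefore $\lambda \equiv -1 \pmod{4\ell}$ and $\mu \equiv 2 \pmod{4\ell}$, yielding $\ord_2 \mu = 1$. In case (b), setting $\lambda' = 2^{-c}\lambda$, $\mu' = 2^{-c}\mu$ gives odd $S$-units with $\lambda' + \mu' = 2^{|c|}$; reducing modulo $4\ell$ places $2^{|c|}$ in $\{-2, 0, 2\} \pmod{4\ell}$, and coprimality to $\ell$ together with a $2$-adic check on $\pm 2 \pmod{4\ell}$ forces $|c| = 1$.

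The step to pin down most carefully is the joint use of mod-$4$ and mod-$\ell$ information: the hypothesis is tailored so that the mod-$\ell$ part kills the otherwise admissible congruence $\lambda \equiv 1 \pmod{4\ell}$ in case (a), while the mod-$4$ part controls the $2$-adic valuation of $\mu$ (respectively of $2^{|c|}$). With $\max \le 1 \le 4$ established, Theorem~\ref{thm:KO} delivers the asymptotic Fermat conjecture for~\eqref{eqn:genferm} over $\Q$, which in particular covers all integer solutions.
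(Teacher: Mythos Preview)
Your proof is correct and follows essentially the same approach as the paper. You work directly with the $S$-units $\lambda,\mu$ and reduce modulo $4\ell$, while the paper clears denominators to write $\lambda=u/w$, $\mu=v/w$ with $u+v=w$ in $\Z$ and then reduces $u,v,w$ modulo $4\ell$; the two case splits (your (a)/(b) versus the paper's (ii)/(i)) match up exactly, and in both versions the key input is that every odd $S$-unit is $\equiv\pm 1\pmod{4\ell}$, which forces the relevant $2$-adic valuation to equal $1$.
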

\begin{proof}
Serre's modularity conjecture over $\Q$ was proved
by Khare and Wintenberger. Over $\Q$ the Eichler--Shimura
conjecture is in fact the Eichler--Shimura theorem.
Thus we can apply Theorem~\ref{thm:KO} unconditionally.
Here, as we're working over $\Z$ we might as well identify prime
ideals with primes. Then
\[
S=\{2\} \cup \{ q_1,q_2,\dotsc,q_r\}, \qquad T=\{2\},
\]
where the $q_i$ are the prime divisors of $ABC$. Thus $q_i \equiv \pm 1
\pmod{4 \ell}$ for $i=1,\dots,r$. Let $(\lambda,\mu)$ be a solution to the
$S$-unit equation $\lambda+\mu=1$.
To deduce the corollary from Theorem~\ref{thm:KO} all we have to do
is to show that
\begin{equation}\label{eqn:ineq}
\lvert \ord_2(\lambda) \rvert \le 4, \qquad \lvert \ord_2(\mu) \rvert \le 4.
\end{equation}
We can rewrite $\lambda+\mu=1$ as
\[
u+v=w, \qquad \lambda=\frac{u}{w}, \qquad \mu=\frac{v}{w},
\]
where
\[
u=\pm 2^{a} \cdot q_1^{\alpha_1} \cdots q_r^{\alpha_r}, \qquad
v=\pm 2^b \cdot q_1^{\beta_1} \cdots q_r^{\beta_r}, \qquad
w= 2^c \cdot q_1^{\beta_1} \cdots q_r^{\beta_r},
\]
where the exponents are non-negative integers,
and we may suppose (after possibly swapping $\lambda$, $\mu$)
that
\begin{enumerate}
\item[(i)]
either $a=b=0$ and $c>0$,
\item[(ii)] or $b=c=0$ and $a>0$.
\end{enumerate}
Let's look at (i). Then
$u \equiv \pm 1 \pmod{4\ell}$ and $v \equiv \pm 1 \pmod{4\ell}$.
Hence
\[
w=u+v \equiv \pm 1 \pm 1 \pmod{4\ell}.
\]
Therefore $w \equiv 2 \pmod{4\ell}$ or $0 \pmod{4\ell}$ or $-2 \pmod{4\ell}$.
However, $\ell \nmid w$ since $\ell \ne q_i$ for $i=1,\dotsc,r$.
Hence $w \equiv \pm 2 \pmod{4\ell}$. Therefore $c=\ord_2(w)=1$.
Hence $\ord_2(\lambda)=a-c=-1$ and $\ord_2(\mu)=b-c=-1$. This establishes
\eqref{eqn:ineq} for case (i). The proof of \eqref{eqn:ineq}
in case (ii) is similar.
\end{proof}

\bigskip

From this Kara and Ozman deduce an analogue
of Theorem~\ref{thm:density} for complex quadratic fields.
\begin{thm}[Kara and Ozman] \label{thm:KOdensity}
Assume conjectures (I) and (II).
Let $\mathbb{N}^{\mathrm{sf}}$ denote the set of squarefree
natural numbers. Let $\mathcal{F}$ be the subset of $d \in \mathbb{N}^{\mathrm{sf}}$
for which the asymptotic Fermat conjecture holds over $\Q(\sqrt{-d})$.
Then
\[
\liminf_{X \rightarrow \infty}
\frac{\# \{ d \in \mathcal{F} \; :\;  d \le X \}}{
\#\{d \in
\mathbb{N}^{\mathrm{sf}} \; : \; d \le X\}} \ge 5/6.
\]
\end{thm}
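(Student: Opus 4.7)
The plan is to apply Theorem~\ref{thm:KO} to the classical Fermat equation (the case $A=B=C=1$) over each imaginary quadratic field $K=\Q(\sqrt{-d})$, directly paralleling the way Freitas and Siksek deduce Theorem~\ref{thm:density} from Theorem~\ref{thm:FermatGen} in the real quadratic case. Since $\calR=\OO_K$, the set $S$ coincides with the set of prime ideals of $\OO_K$ above $2$, and $T$ is the subset of those of residue degree one. Under Conjectures (I) and (II), Theorem~\ref{thm:KO} reduces the asymptotic Fermat conjecture over $K$ to verifying that every solution $(\lambda,\mu) \in (\OO_S^*)^2$ of $\lambda+\mu=1$ admits some $\mP \in T$ with $\max\{\lvert \ord_\mP(\lambda) \rvert, \lvert \ord_\mP(\mu) \rvert\} \le 4\ord_\mP(2)$.

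First I would classify squarefree $d>0$ by the splitting type of $2$ in $K$: $2$ is inert iff $d\equiv 3 \pmod 8$, ramified iff $d \equiv 1,2 \pmod 4$, and split iff $d \equiv 7 \pmod 8$. Only the inert case forces $T=\emptyset$, and those $d$ must be discarded. Among squarefree integers the six admissible residue classes modulo $8$ (namely $1,2,3,5,6,7$) each have density $1/\pi^2$---the four odd classes from the density $4/\pi^2$ of odd squarefrees, and the two even classes $\{2,6\}$ from the density $2/\pi^2$ of twice-odd squarefrees---so the discarded class has relative density exactly $1/6$ within $\mathbb{N}^{\mathrm{sf}}$. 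This single exclusion accounts for the entire gap between the target $5/6$ and $1$.

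Next, for every remaining $d$, I would verify the valuation bound in Theorem~\ref{thm:KO} by analyzing the $S$-unit equation. Because $K$ is imaginary quadratic with finite unit group $\OO_K^*=\{\pm 1\}$ outside the finite exceptional set $d\in\{1,3\}$, Dirichlet gives $\OO_S^*$ of rank $\#S \in \{1,2\}$. In the ramified case $S=\{\mP\}$; moreover $[\mP]^2 = [(2)] = 1$, so the class of $\mP$ has order $h' \in \{1,2\}$, and the $S$-units are of the form $\pm\pi^k$ for a generator $\pi$ of $\mP^{h'}$. Taking $\mP$-valuations in $\lambda+\mu=1$ eliminates the mixed-sign configurations $(a,b)$ with $ab<0$ (the two valuations being incomparable forces $\ord_\mP(\lambda+\mu)=\min<0$) and leaves only configurations that, together with the $\cS_3$-action from Lemma~\ref{lem51}, yield $\max\{|\ord_\mP(\lambda)|,|\ord_\mP(\mu)|\}\le\ord_\mP(2)$, well inside $4\ord_\mP(2)$. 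In the split case $S=\{\mP_1,\mP_2\}$ with $\ord_{\mP_i}(2)=1$, so the required bound is $4$ at one of the two primes; here one uses that complex conjugation swaps $\mP_1$ and $\mP_2$ (hence $\ord_{\mP_1}(\lambda)=\ord_{\mP_2}(\bar\lambda)$) and combines this Galois symmetry with the $\cS_3$-action to transform any purported counterexample into a configuration incompatible with $\lambda+\mu=1$, mirroring the strategy Freitas and Siksek use in the analogous real quadratic situation.

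The main obstacle is the split case, since unlike the ramified case neither the class of $\mP_1$ nor the residue degree imposes an \emph{a priori} bound on $\ord_{\mP_i}(\lambda)$; one must genuinely exploit the interplay between the $\cS_3$-action, complex conjugation, and the norm identity $\Norm(\lambda) = \pm 2^{\ord_{\mP_1}(\lambda)+\ord_{\mP_2}(\lambda)}$ to obtain the $4\ord_\mP(2)$ bound, and one must also treat the subcase where $[\mP_1]\in\Cl(K)$ has even order so that $\OO_S^*$ is not freely generated by uniformizers alone. Once this technical step is established for each $d\not\equiv 3 \pmod 8$ (outside finitely many exceptions such as $d=3$, for which $\zeta_3\in K$), combining the density $5/6$ from the first step with the valuation bound yields, via Theorem~\ref{thm:KO},
\[
\liminf_{X \rightarrow \infty}
\frac{\# \{ d \in \cF \; :\;  d \le X \}}{
\#\{d \in \mathbb{N}^{\mathrm{sf}} \; : \; d \le X\}} \ge \frac{5}{6}.
\]
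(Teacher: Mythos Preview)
The paper does not give its own proof of this theorem, deferring instead to \cite{KO}; what it does supply is the Remark immediately following the statement. Your outline---apply Theorem~\ref{thm:KO} with $A=B=C=1$, discard the inert class $d\equiv 3\pmod 8$ where $T=\emptyset$, and verify the valuation bound for the ramified and split classes---is exactly the route that Theorem~\ref{thm:KO} is set up to feed, and your density bookkeeping for the excluded $1/6$ is correct and agrees with the Remark.

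One point of contact worth making explicit: the paper's Remark attributes the missing $1/6$ to \emph{fake elliptic curves} in the Eichler--Shimura conjecture over imaginary quadratic fields, whereas you attribute it to $T=\emptyset$. These are the same obstruction seen at two levels: the requirement $\mP\in T$ in Theorem~\ref{thm:KO} is there precisely because a prime of residue degree~$1$ above~$2$ forces potentially multiplicative reduction of the Frey curve, which rules out the fake-elliptic-curve branch of the Eichler--Shimura output. So your surface observation and the paper's deeper explanation coincide. Your ramified-case analysis is sound (indeed one gets $\max\le \ord_\mP(2)$, comfortably inside $4\ord_\mP(2)$); the split case is, as you say, the substantive step, and what you have written there is a plan rather than a proof---but since the paper itself offers no more detail than a citation, your proposal is at the same level of completeness as the exposition you are matching.
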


\bigskip

\noindent \textbf{Remark.}
It is interesting to compare Theorems~\ref{thm:density} and~\ref{thm:KOdensity}.
In the former, the asymptotic Fermat conjecture is established for almost
all real quadratic fields, assuming the Eichler--Shimura conjecture.
In the latter, even assuming the Eichler--Shimura conjecture
and Serre's modularity conjecture, the asymptotic Fermat conjecture is
established for $5/6$ of imaginary quadratic fields.
The reason for the disparity is that the
conclusion of the Eichler--Shimura conjecture
over real quadratic fields is stronger than that for
the Eichler--Shimura conjecture
over complex quadratic fields. Over a real quadratic field $K$
it is conjectured  that
a rational weight $2$ Hilbert eigenform $\ff$ over $K$
corresponds to an elliptic curve $E/K$.
Over a complex quadratic field $K$,
it is conjectured that a rational weight $2$
Bianchi eigenform over $K$ corresponds to
either an elliptic curve $E/K$, or an abelian surface $A/K$
whose
endomorphism algebra is an indefinite division quaternion
algebra (such an abelian surface is called a
\textbf{fake elliptic curve}). If $2$ splits or
ramifies in $K$ then the Frey curve has potentially
multiplicative reduction at the primes above $2$
and it is known that fake elliptic curves have potentially
good reduction at all primes. An image of inertia argument
then allows for the elimination of the fake elliptic curve
case. Unfortunately if $2$ is inert in $K$, then
the Frey elliptic curve might have potentially good
reduction, and we are yet to find a way of
eliminating the possiblity of a fake elliptic curve.
We note that $2$ is inert in $\Q(\sqrt{-d})$
if and only if $-d \equiv 5 \pmod{8}$. This
is $1/6$ of all complex quadratic fields,
and explains the numerical disparity between
Theorems~\ref{thm:density} and~\ref{thm:KOdensity}.

\bigskip

\subsection{Other Signatures}
An  equation of the form $A x^p+By^q=Cz^r$
is called the generalized Fermat equation of \textbf{signature} $(p,q,r)$.
Thus \eqref{eqn:genferm} has signature $(p,p,p)$.
Generalized Fermat equations of signatures $(p,p,2)$
and $(p,p,3)$ have good Frey curves and have been studied, with
$K=\Q$,
respectively by Bennett and Skinner \cite{BennettSkinner} and by Bennett, Vatsal
and Yazdani \cite{BVY}.
%Other families of Fermat type
%equations often considered are
%\[
%A x^p+By^p=Cz^2
%\]
%which has signature
More recently the techniques used by Freitas and Siksek
and by Kara and Ozman have been applied by
Isik, Kara and Ozman \cite{IKO} to study Fermat equations
of signature $(p,p,2)$ over number fields.
\bibliographystyle{plain}
\bibliography{SUnit.bib}
\end{document}